\newtheorem{thm}{Theorem}
\newtheorem{obs}[thm]{Observation}
\title{Network Decontamination with a Single Agent}
\author{Yessine Daadaa\inst{1}
\and Asif Jamshed\inst{2}
\and Mudassir Shabbir\inst{2}}
\institute{ School of Electrical Engineering and Computer Science,\\
Faculty of Engineering,\\
University of Ottawa,\\
Ottawa, Canada\\
\and Department of Computer Science, Rutgers University, NJ, USA\\
\email{ydaadaa@site.uottawa.ca},\\
\email{\{ajamshed,mudassir\}@cs.rutgers.edu}\\}
\begin{document}

\maketitle
%\tableofcontents

\begin{abstract}
Faults and viruses often spread in networked environments by propagating from site to neighboring site. We model this process of
 {\em network contamination} by graphs. Consider a graph $G=(V,E)$, whose vertex set is contaminated and our goal is to decontaminate
 the set $V(G)$ using mobile decontamination agents that traverse along the edge set of $G$. Temporal immunity $\tau(G) \ge 0$ is 
defined as the time that a decontaminated vertex of $G$ can remain continuously exposed to some contaminated neighbor without getting
 infected itself. The \emph{immunity number} of $G$, $\iota_k(G)$, is the least $\tau$ that is required to decontaminate $G$ using $k$
 agents. We study immunity number for some classes of graphs corresponding to network topologies and present upper bounds on
 $\iota_1(G)$, in some cases with matching lower bounds. Variations of this problem have been extensively studied in literature,
 but proposed algorithms have been restricted to {\em monotone} strategies, where a vertex, once decontaminated, may not be 
recontaminated. We exploit nonmonotonicity to give bounds which are strictly better than those derived using monotone strategies.
\end{abstract}

%\newpage

\section{Introduction}

Faults and viruses often spread in networked environments by propagating from site to neighboring site.
 The process is called {\em network contamination}. Once contaminated, a network node might behave incorrectly, and it 
 could cause its neighboring node to become contaminated as well, thus propagating faulty computations. The propagation 
patterns of faults can follow different dynamics, depending on the behavior of the affected node, and topology of the network. 
At one extreme we have a full spread behavior: when a site is affected by a virus or any other malfunction, such a malfunction
 can propagate to all its neighbors; other times, faults propagate only to sites that are susceptible to be affected; the 
definition of susceptibility depends on the application but oftentimes it is based on local conditions, for example, a node 
could be vulnerable to contamination if a majority of its neighbors are faulty, and immune otherwise (e.g., see 
\cite{kutten1999fault}, \cite{kutten2000tight}, \cite{luccio2006network}); or it could be immune to 
contamination for a certain amount of time after being repaired (e.g., see \cite{daadaa2010network},
 \cite{flocchini2008tree}). 

In this paper we consider a propagation of faults based on what we call {\em temporal immunity}: a clean node 
is allowed to be exposed to contaminated nodes for a predefined amount of time after which it becomes {\em contaminated}.
 Actual decontamination is performed by mobile cleaning agents which which move from host to host over network connections.

\subsection{Previous Work}
\paragraph{\bf Graph Search.}
The decontamination problem considered in this paper is a variation of a problem extensively studied in the literature known as 
{\em graph search}.
The graph search problem was first introduced by Breish in \cite{breisch1967intuitive}, where an approach for the problem of finding 
an explorer that is lost in a complicated system of dark caves is given. Parsons (\cite{parson1976pursuit}\cite{parsons1978search}) 
proposed and studied
 the pursuit-evasion problem on graphs. Members of a team of searchers traverse the edges of a graph in pursuit of a fugitive,
 who moves along the edges of the graph with complete knowledge of the locations of the pursuers. 
The efficiency of a graph search solution is based on the size of the search team. Size of smallest search team that
can {\em clear} a graph $G$ is called search number, and is denoted 
in literature by $s(G)$. In \cite{megiddo1988complexity}, Megiddo et al. 
approached the algorithmic question: {\em Given an arbitrary $G$, how should one calculate $s(G)$?}
They proved that for arbitrary graphs, determining if the search number  is less than or
 equal to an integer $k$ is NP-Hard. They also gave algorithms to compute $s(G)$ where
 $G$ is a special case of trees. For their results, they use the fact that recontamination of a cleared
vertex does not help reduce $s(G)$, which was proved by LaPaugh in \cite{lapaugh1993recontamination}.
A search plan for  $G$ that does not involve recontamination of cleared vertices is referred to as a {\em monotone} plan.

\paragraph{\bf Decontamination.}
The model for decontamination studied in literature is defined as follows.
 A team of agents is initially located at the same node, the homebase, and all the other nodes are contaminated.
 A decontamination strategy consists of a sequence of movements of the agents along the edges of the network.
 At any point in time each node of the network can
 be in one of three possible states: clean, contaminated, or guarded. A node is guarded when it contains at least one 
agent. A node is clean when an agent passes by it and all its neighboring nodes are clean or guarded, contaminated otherwise.
 Initially all nodes are contaminated except for the homebase (which is guarded). The solution to the problem is given by devising
 a strategy for the agents to move in the network in such a way that at the end all the nodes are clean.\\
The tree was the first topology to be investigated. In \cite{barriere2002capture}, Barri\`ere et al. showed that 
for a given tree $T$, the minimum number of agents needed to decontaminate $T$ depends on the location of the 
homebase. They gave first strategies to decontaminate trees.\\
In \cite{flocchini2005size}, Flocchini et al. consider the problem of decontaminating a mesh graph. 
They present some lower bounds on the number of agents, number of moves, and time required
 to decontaminate a $p\times q$ mesh $(p\leq q)$. At least $p$ agents, $pq$ moves, and $p+q-2$ time 
units are required to solve the decontamination problem.
%TODO: multiple citation, mentioned above...please connect
Decontamination in graphs with {\em temporal immunity}, which is similar to the model of decontamination 
used in this paper, was first introduced in \cite{flocchini2008tree} where minimum team size necessary to disinfect 
a given tree with temporal immunity $\tau$ is derived. The main difference between the classical 
decontamination model, and the new model in \cite{flocchini2008tree} is that, once an agent departs, 
the decontaminated node is immune for  a certain $\tau \geq 0$ (i.e. $\tau=0$ corresponds 
to the classical model studied in the previous work) time units to viral attacks from infected neighbors.
 After the temporal immunity $\tau$ has elapsed, recontamination can occur.\\
Some further work in the same model was done in \cite{daadaa2011decontamination},
where a two dimensional lattice is considered.

\subsection{Definitions and Terminology}
We only deal with connected finite graphs without loops or multiple edges. For a graph $G=(V,E),$  and 
a vertex $v \in V$ let $N(v),$ the neighborhood of $v,$ be the set of all vertices $w$ such that $v$ is 
connected to $w$ by an edge. Let $deg(v)$ denote the degree of a vertex $v$ which is defined to be the size of its neighborhood. 
Maximum and minimum degree of any vertex in $G$ is denoted by $\Delta(G)$ and $\delta(G)$ respectively. 
The shortest distance between any two vertices $u,v \in V$ is denoted by $dist(u,v)$ and eccentricity of $v \in V$ is 
the maximum $dist(u,v)$ for any other vertex $u$ in $G$. The radius of a graph, $rad(G)$, is the minimum eccentricity 
of any vertex of $G$ and the vertices whose eccentricity is equal to $rad(G)$ are called the \emph{center vertices}. 
The diameter of a graph, $diam(G)$, is the maximum eccentricity of a vertex in $G$.

$K_n$ is the complete graph on $n$ vertices. $K_{m,n}$ denotes the complete bipartite 
graph where the size of two partitions is $m$ and $n$. An acyclic graph is known as a tree and a vertex 
of degree $1$ in a tree is known as a \emph{leaf} of the tree. Rest of the tree terminology used is standard.
 A star graph, $S_n$, is a tree on $n+1$ vertices where one vertex has degree $n$ and the rest of the vertices are
 leaves. Sometimes a single vertex of a tree is labeled as the \emph{root} of the tree. In this case the 
tree is known as a {\em rooted} tree. If we remove the root vertex from a rooted tree it decomposes into one or more
subtrees; each such subtree along with the root is called a {\em branch}, denoted by $B_i$, of original tree.
 Similarly, an {\em arm} is the set of vertices that lie on the path from root to a leaf, denoted by $A_i$.

Other classes of graphs will be defined as and when needed.

\subsection{Decontamination Model Specification}
Our decontamination model is a synchronous system. We assume that initially, 
at time $t = 0,$ all vertices in the graph are contaminated. A \emph{decontaminating agent} (henceforth referred to as an agent)
 is an entity, or a marker, that can be placed on any vertex. A concept similar to this is referred to in the literature as a
 \emph{pebble} \cite{chung1989pebbling}. 
%By $V_t(x)$ let us denote the vertex of the graph where the agent $x$ is located at
 %time step $t$. Assume that $V_k(x_i) = u_j$ 
Assume that at some time step $k$, agent is at $v\in V$, then at the next time step, we may move the agent 
to any of the neighbors of $v$.
 Vertices visited in this process are marked {\em decontaminated}, 
or {\em disinfected}. Any vertex that the agent is currently
 placed on is considered to be decontaminated.

A decontaminated vertex can get contaminated by uninterrupted exposure, for a certain amount of time, 
to a contaminated vertex 
in its neighborhood.
For decontaminated $v$ if there is no agent placed on $v$ but some neighbor of $v$ is contaminated, we say that 
$v$ is \emph{exposed}. For a decontaminated vertex $v$ we define the \emph{exposure time} of $v$, $\Xi(v)$, 
as the duration time $v$ has been exposed. Every time an agent visits $v$, or all vertices in $N(v)$ are
decontaminated, we reset $\Xi(v)=0$.
We say that $G$ has temporal immunity $\tau(G)$ if a decontaminated vertex $v\in V$ can only be recontaminated if for 
uninterrupted $\tau(G)$ time units, there is a neighbor of $v$ (not necessarily unique) that is contaminated and an 
agent does not visit $v$ during that time period. 
Note that for any decontaminated vertex $v$ we have that $0 \le \Xi(v) \le \tau(G) - 1$.

\begin{figure}[ht]
\begin{center}
\includegraphics[height=2.1in]{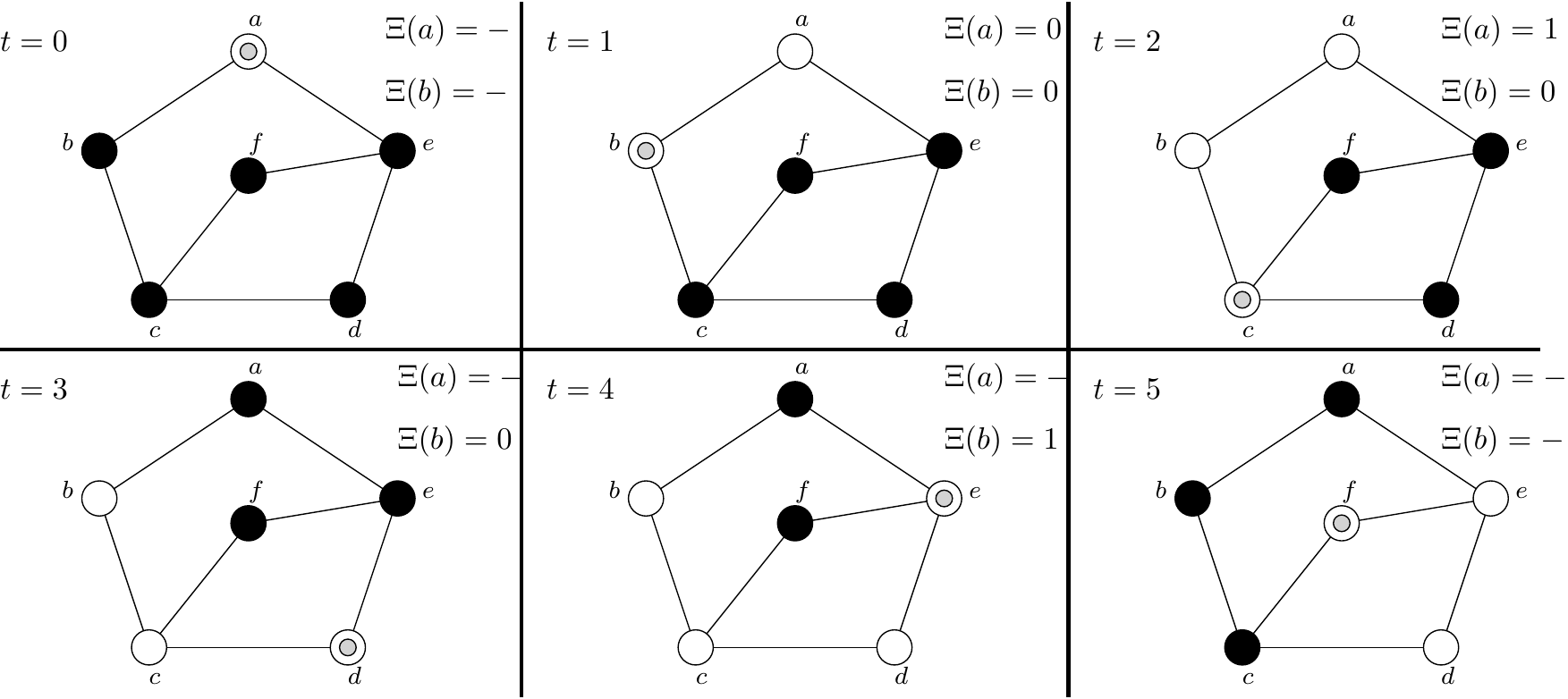}
\caption{Figure illustrates variation in exposure times of vertices $a$ and $b$ at different time steps as the agent tries to decontaminate $G$, with $\tau(G) = 2$.}
\label{example}
\end{center}
\end{figure}

Given a graph $G$, a temporal immunity $\tau$ and $n$ agents, our goal is to devise a decontamination strategy,
 which consists of choosing an initial placement for the agents and their movement pattern so that we can reach a
 state where all the vertices of $G$ are simultaneously decontaminated and we call the graph fully decontaminated. 
A strategy is called monotone if a decontaminated vertex is never recontaminated and is called nonmonotone otherwise. 
\emph{Immunity number} 
of $G$ with $k$ agents, $\iota_k(G)$, is the least $\tau$ for which full decontamination of $G$ is possible.
It is trivial to see that $\iota_k(G)$ is always finite for
$k\geq 1$.
\begin{obs}
Let $G$ be a connected graph on $n$ vertices, then  $\iota_k(G) \le 2(n - 1)$ for all $k\geq 1$.
\end{obs}
Without loss of generality, assume that $k=1$. First compute a spanning tree $T$ of $G$ and then make your 
the agent move in a depth first search order on $T$.
 Since the entire traversal takes exactly $2(n-1)$ steps this monotone strategy fully decontaminates any given graph.\\ 
However, in this paper we focus on decontamination of graph with a single agent; this gives us the liberty to use shortened
notation $\iota(G)$, and just $\iota$ when the graph is obvious from context, to mean $\iota_1(G)$, the immunity number of a graph
using a single agent.
\subsection{Our Results}
In section~\ref{section2} we prove bounds on $\iota$ for some simple graphs. In section~\ref{secSpider} we give
asymptotically sharp upper and lower bounds on $\iota(G)$ where $G$ is a mesh graph. We also give algorithms to decontaminate
spider graphs and $k$-ary trees. We then extend these techniques to give upper bound on immunity
number of general trees. In section~\ref{secDiscussion} we discuss some open problems. Our results are outlined in the table below.
\begin{table}
\begin{center}
\begin{tabular}[c]{|l|l|l|}
    \hline
    \centering{{\bf Graph Topology}}               & {\bf Upper Bound on $\iota$}                                 & {\bf Lower Bound on $\iota$}                                         \\ \hline
    Path $P_n$                         & $0$\hfill [Proposition~\ref{propPath}]                 & $0$ \hfill  [Proposition~\ref{propPath}]                       \\ \hline
    Cycle $C_n$                        & $2$\hfill   [Proposition~\ref{propCycle}]              & $2$\hfill   [Proposition~\ref{propCycle}]                      \\ \hline
    Complete Graph $K_n$               & $n-1$\hfill  [Theorem~\ref{thmComplete}]               & $n-1$\hfill  [Theorem~\ref{thmComplete}]                       \\ \hline
    Complete Bipartite Graph $K_{m,n}$, with $m\leq n$ & $2(m-1)$\hfill [Theorem~\ref{thmCompleteBipartite}]    & $2(m-1)$\hfill [Theorem~\ref{thmCompleteBipartite}]            \\ \hline
    Spider Graph on $n+1$ vertices     & $4\sqrt{n}$\hfill  [Corollary~\ref{corSpider}]         & - \\ \hline
    Tree on $n$ vertices                       & $30\sqrt{n}$\hfill  [Theorem~\ref{thmTreesUpperBound}] & - \\ \hline
    Mesh $m\times n$                   & ${m}$\hfill  [Theorem~\ref{meshUpperbound}]            & $\frac{m}{2}$ \hfill [Theorem~\ref{thmMeshLowerbound}]                                            \\ \hline
    Planar Graph on $n$ vertices       & $n-1$\hfill  [Theorem~\ref{thmGraphUpperBound}]        & $\Omega(\sqrt{n})$ \hfill [Corollary~\ref{corPlanarLowerBound}]                                          \\ \hline
    General Graphs                     & $n-1$\hfill  [Theorem~\ref{thmGraphUpperBound}]        & $n-1$\hfill   [Theorem~\ref{thmComplete}]                      \\ \hline
    \end{tabular}
    \caption{A summary of our results.}
\end{center}
    \end{table}
\section{Some Simple Graphs}
\label{section2}
We begin with the simple case when the graph that we want to decontaminate is a path.
\begin{proposition}
\label{propPath}
Let $P_n$ be a path on $n$ vertices, then $\iota(P_n) = 0$ for all $n \ge 1$.
\end{proposition}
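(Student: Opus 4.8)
The plan is to prove the two inequalities $\iota(P_n) \le 0$ and $\iota(P_n) \ge 0$ separately, with the latter being immediate. Since temporal immunity is non-negative by definition, every admissible $\tau$ satisfies $\tau \ge 0$, so the least admissible value is at least $0$; this disposes of the lower bound with no work. All the content is therefore in the upper bound: exhibiting a single-agent strategy that fully decontaminates $P_n$ even when $\tau = 0$, i.e.\ in the regime where any exposure at all is fatal.

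For the upper bound I would label the vertices $v_1, v_2, \ldots, v_n$ along the path, place the agent on the endpoint $v_1$, and have it walk monotonically to the other endpoint, occupying $v_i$ at time step $i-1$. The key structural claim to establish is an invariant: at the moment the agent sits on $v_i$, the decontaminated set is exactly the prefix $\{v_1, \ldots, v_i\}$, and the only decontaminated vertex adjacent to a contaminated vertex is $v_i$ itself, whose single contaminated neighbor is $v_{i+1}$. Because $v_i$ currently holds the agent it is \emph{guarded}, hence not exposed; every other decontaminated vertex $v_j$ with $j < i$ has both its neighbors inside the clean prefix. Thus at every time step the exposure time of every decontaminated vertex is $0$, so no recontamination can occur for any value of $\tau$, in particular for $\tau = 0$. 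After $n-1$ steps the agent reaches $v_n$, the whole path is clean, and decontamination is complete; the case $n=1$ is handled by simply placing the agent on the lone vertex.

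The only point requiring care is verifying that the invariant survives the single move from $v_i$ to $v_{i+1}$: one must check that the vertex $v_i$ just vacated never becomes exposed. This holds precisely because the model is synchronous, so $v_{i+1}$ becomes guarded at the same step at which $v_i$ loses the agent; the clean/contaminated boundary is shielded by the agent at all times and no gap ever opens. This is the entire subtlety, and once the invariant is in place the conclusion $\iota(P_n)=0$ follows at once. As a byproduct the argument shows the strategy is monotone, so here nonmonotonicity buys nothing.
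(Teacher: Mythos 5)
Your proof is correct and uses exactly the strategy in the paper: the agent starts at one endpoint and walks monotonically to the other, so that the agent always guards the clean/contaminated boundary and no decontaminated vertex is ever exposed, even with $\tau = 0$. Your added invariant and the explicit handling of the trivial lower bound are just fuller write-ups of what the paper leaves as ``easy to see.''
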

It is easy to see that we do not need any temporal immunity to decontaminate the entire path if we start with our agent 
at one leaf vertex and at each time step we move it towards the other end until we reach it at $ t = n - 1$.

A cycle can be decontaminated using a similar strategy.
\begin{proposition}
Let $C_n$ be a cycle on $n$ vertices, $\iota(C_n) = 2$ for all $n \ge 4$.
\label{propCycle}
\end{proposition}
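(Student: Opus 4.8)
The plan is to prove the two matching bounds separately: a lower bound $\iota(C_n) \ge 2$, showing that temporal immunity $0$ or $1$ cannot suffice, and an upper bound $\iota(C_n) \le 2$, by exhibiting an explicit single-agent strategy that fully decontaminates $C_n$ when $\tau = 2$.

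For the lower bound I would argue that with $\tau \le 1$ a decontaminated vertex is recontaminated as soon as it is exposed for a single time step, so a clean vertex survives to the next step only if the agent sits on it or all of its neighbours are already clean. Suppose, for contradiction, that some strategy reaches a fully clean configuration, and consider the first time $t+1$ at which every vertex is clean. Since a vertex can change from contaminated to decontaminated only when the agent moves onto it, at time $t$ the contaminated set must consist of the single vertex $w$ that the agent cleans in this step. Writing $N(w) = \{u, w'\}$, where $u$ is the vertex the agent comes from, the vertex $w'$ is already clean at time $t$, has the contaminated neighbour $w$, and is not visited by the agent; with $\tau = 1$ this forces $w'$ to be recontaminated at time $t+1$, contradicting full decontamination. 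Because $n \ge 4$ (indeed $n \ge 3$) guarantees that $w$ has two distinct neighbours with $w' \neq u$, this establishes $\iota(C_n) \ge 2$.

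For the upper bound I would design a sweep with $\tau = 2$, maintaining the invariant that the contaminated vertices always form a single arc and that the agent is stationed at the clean boundary adjacent to the leading end of that arc. At each step the agent advances by one vertex into the contaminated arc, decontaminating its leading end; the crucial point is that the one step of immunity afforded by $\tau = 2$ lets the trailing clean boundary survive the single step during which it is exposed before the advancing front effectively seals it, so the clean arc expands and the contaminated arc strictly shrinks as the agent travels around the cycle. The process terminates with a closing step in which the agent moves onto the last remaining contaminated vertex; because the two neighbours of that vertex have just become clean, their exposure counters $\Xi$ are below $2$, so no recontamination occurs and all $n$ vertices are simultaneously clean.

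The main obstacle is the synchronous boundary bookkeeping in the upper bound: I must verify, step by step, that the exposure counter of the trailing clean vertex never reaches the threshold $2$ one step too early, since otherwise the contamination would chase the agent around the cycle at its own speed and the clean arc would fail to grow. Making the invariant precise—tracking $\Xi$ at both ends of the clean arc through each ordinary move and through the final closing step—is where the real work lies; the hypothesis $n \ge 4$ is used here to guarantee enough room for the sweep both to advance its front and to outrun its trailing boundary, with the degenerate case $n = 3$ instead being covered by the complete-graph bound.
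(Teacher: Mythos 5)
Your lower bound contains a genuine gap, and it comes from a misreading of the model. The paper's exposure rule states that $\Xi(v)$ is reset to $0$ not only when the agent visits $v$ but also whenever \emph{all} vertices of $N(v)$ become decontaminated. So in your closing step, when the agent moves onto the last contaminated vertex $w$, the neighbourhood of $w'$ becomes entirely clean at that very step; $\Xi(w')$ is reset and $w'$ is \emph{not} recontaminated. There is no contradiction at the final step: a configuration with a single contaminated vertex adjacent to the agent can always be finished off, for any $\tau$. The impossibility with $\tau=1$ must therefore be located earlier, and that is exactly what the paper does: it proves the invariant that with $\tau=1$ the set of clean vertices can never grow beyond two adjacent vertices (moving forward recontaminates the vertex behind the agent, whose rear neighbour $v_n$ \emph{stays} contaminated, so no reset applies; moving back undoes the previous move). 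In particular the single-contaminated-vertex configuration your argument starts from is never reachable when $n \ge 4$ --- your proof assumes it is reached and looks for the contradiction at the one step where none occurs. Note also that your two halves use incompatible semantics: if, as your lower bound assumes, a vertex exposed during the closing step were recontaminated simultaneously with the cleaning of its last contaminated neighbour, then the closing step of your own $\tau=2$ sweep would fail as well (the trailing neighbour of the last contaminated vertex sits at $\Xi=1$ and is exposed during that step), and so would the paper's sequential strategy for $K_n$ with $\tau=n-1$.

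Your upper bound uses the same strategy as the paper (a unidirectional sweep with $\tau=2$), but the invariant you propose to verify is false. With $\tau=2$ the trailing end of the clean arc is \emph{continuously} exposed --- its rear neighbour remains contaminated until the agent completes the lap, so nothing ``seals'' it --- and it is recontaminated every second step. The contaminated arc does not strictly shrink and the strategy is genuinely nonmonotone, as the paper emphasizes: contamination chases the agent around the cycle at half the agent's speed, the clean arc grows only at an amortized rate of one vertex per two steps, and full decontamination takes roughly $2n$ steps (about two laps), not one. A correct write-up should track this half-speed pursuit (e.g., show the clean arc has net growth one per two steps and that the closing step succeeds by the reset rule), rather than attempt to establish monotone growth of the clean arc, which a step-by-step check would refute after the very first two moves.
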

\begin{proof}
To see that $\iota(C_n) \le 2$ set the temporal immunity $\tau = 2$ and begin with the agent at any vertex of the cycle. 
At $t = 1$ choose one of it neighbors to move to. Henceforth, for $t = k \ge 2$, we always move our agent in a fixed, say clockwise,
direction.
It is straightforward to verify that we will end up with a fully decontaminated graph in at most $2n$ time steps. 
Note that this is a nonmonotone strategy. 

%Assume that the vertices are labeled $v_1, v_2, \ldots, v_n$ in clockwise order, and say that 
%the agent
%starts at $v_1$. By the time the agent reaches $v_4$, the vertex $v_1$ will get contaminated due to uninterrupted exposure to contaminated
%$v_n$ for two time units, but $v_2$ does not get contaminated until agent reaches $v_6$. This implies that by the time the agent
%returns to $v_1$ after completing a cycle $v_2, \ldots, v_n$ are contaminated. Due to this relatively slower progress,
%by the time agent completes second cycle, all $c\in C_n$ are decontaminated.

If we set temporal immunity $\tau = 1$ then we will show that we can never decontaminate more than two (adjacent) 
vertices of the cycle. Suppose that four vertices $v_n, v_1, v_2, v_3$ appear in the cycle in that order. 
Assume that at time step $t = 0$ the agent is placed at $v_1$ and, without loss of generality, it moves to $v_2$ at the 
next time step. At $t = 2$ if the agent moves to $v_3$ then $v_1$ becomes contaminated due
 to its exposure to $v_n$ and we end up with only $v_2$ and $v_3$ decontaminated which is the same as not having made 
any progress. If, on the other hand, the agent had moved back to $v_1$ at $t=2$ we would again have ended up with no 
progress since the agent would still have the same constraints on proceeding to its next vertex, therefore $\iota(C_n) > 1$.
\qed \end{proof}
\begin{remark}
Bound presented in Proposition~\ref{propCycle} is only tight because of our definition of $\tau$ as an integer. Otherwise
we observe that there always exists a strategy to decontaminate $C_n$ with $\tau=1+\epsilon$ for any real
number $\epsilon>0$ in finite time;
in fact same strategy as outlined in proof of upper bound above will work.
\end{remark}
\subsubsection{Complete Graph and Complete Bipartite Graph}
Path and cycle happen to be the simplest possible graphs that can be decontaminated easily with optimal constant immunity numbers
as seen above.
We now tend to some dense graphs and show that they may require much larger value of $\tau$.
\begin{thm}
\label{thmComplete}
Let $K_n$ be a complete graph on $n$ vertices, then $\iota(K_n) = n - 1$ for all $n \ge 4$.
\end{thm}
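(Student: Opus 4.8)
The plan is to establish the two bounds $\iota(K_n)\le n-1$ and $\iota(K_n)\ge n-1$ separately. The structural fact driving both directions is that in $K_n$ the neighborhood of every vertex $v$ is $V\setminus\{v\}$, so a clean vertex has its exposure counter $\Xi(v)$ reset only when the agent physically visits $v$ or when all of the remaining $n-1$ vertices are simultaneously clean. First I would record the observation that the latter event cannot occur strictly before the whole graph is decontaminated: any configuration with $n-1$ clean vertices that is not the goal state has a unique contaminated vertex, which is a neighbor of every clean vertex, so no clean vertex ever sees an all-clean neighborhood. Consequently, up until the final time step the \emph{only} way to keep a vertex clean is to revisit it with the agent.

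For the upper bound I would fix $\tau=n-1$ and let the single agent walk through the vertices $v_1,v_2,\dots,v_n$, one per time unit, starting at $v_1$ at time $0$ and arriving at $v_n$ at time $n-1$. The worst-off vertex is $v_1$: it is abandoned at time $1$ and, since some vertex among $v_3,\dots,v_n$ stays contaminated until the last move, it is continuously exposed through time $n-2$, giving $\Xi(v_1)=n-2=\tau-1$, exactly at the survival threshold. At time $n-1$ the agent reaches $v_n$, every neighborhood becomes clean at once, and the graph is fully decontaminated. Each earlier-cleaned $v_i$ accumulates strictly less exposure than $v_1$, so the walk succeeds and $\iota(K_n)\le n-1$.

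The lower bound is the crux, and here I would argue that $\tau=n-2$ cannot suffice. Let $T$ be the first time all vertices are clean and let $u$ be the agent's location at time $T$. Since a vertex turns clean only under the agent, every other vertex $v\ne u$ must already have been clean at time $T-1$, and there are $n-1$ such vertices. By the structural observation, before time $T$ each of them was kept clean purely by agent visits, so each must have been visited at some time in the window $\{T-\tau,\dots,T-1\}$, which contains only $\tau$ time units. A single agent occupies at most $\tau$ distinct vertices over $\tau$ consecutive steps, so we would need $n-1\le\tau$; with $\tau=n-2$ this is a contradiction, hence $\iota(K_n)\ge n-1$.

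I expect the main obstacle to be making the lower bound airtight in the presence of nonmonotone play: one must rule out any exotic reset of the exposure counters other than direct agent visits during the final window. The observation that an almost-clean configuration never resets any clean vertex is precisely what closes this gap, and combining it with the counting bound ($n-1$ vertices demanding visits inside a length-$\tau$ window) yields the matching lower bound, so that $\iota(K_n)=n-1$.
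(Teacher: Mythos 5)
Your proposal is correct, and your upper bound is the same as the paper's (a sequential walk through all $n$ vertices with $\tau = n-1$). Your lower bound, however, takes a genuinely different route. The paper argues from the penultimate configuration: supposing exactly one contaminated vertex remains and the agent sits on a clean one, it observes that the $n-2$ clean unguarded vertices are all exposed and have pairwise distinct exposure times below $\tau = n-2$, so by pigeonhole one of them has $\Xi = n-3$, the threshold value; it then checks that every possible next move of the agent fails to reduce the number of contaminated vertices, so the count can never reach zero --- a \emph{no-progress} invariant from the almost-finished state. You instead fix the first completion time $T$ and count backwards: since in $K_n$ an exposure counter can reset only by an agent visit before time $T$ (your structural observation that an all-clean neighborhood cannot occur before global cleanness), each of the $n-1$ vertices other than the agent's final position must have been visited during the window $\{T-\tau,\dots,T-1\}$, and since the agent occupies one vertex per step these last-visit times are distinct, forcing $n-1 \le \tau$ in a single counting step. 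Your formulation buys two things: it avoids the simultaneity/tie-breaking question of whether a vertex at threshold exposure is recontaminated at the very instant the last contaminated vertex is cleaned (the paper's argument implicitly resolves this in the adversary's favor), and it needs no induction showing that the one-contaminated-vertex state, once reached, can never resolve. The paper's argument, in exchange, establishes something slightly stronger: with $\tau = n-2$ the agent not only never finishes but can never push the number of clean vertices beyond $n-1$. Both proofs ultimately rest on the same structural facts --- completeness makes every clean unguarded vertex exposed, and a single agent resets at most one counter per time step --- so the difference is in the bookkeeping, where yours is the cleaner of the two.
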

\begin{proof}
Let the vertex set $V = \{v_1, v_2, \ldots, v_n\}$. Since there is fully connected, we can fully decontaminate $K_n$ by 
making the agent visit all the vertices sequentially 
in any order giving us $\iota(K_n) \le n - 1$.\\

To see that this bound is actually tight we need to show that temporal immunity 
of $n - 2$ is not good enough for full decontamination. For this purpose set $\tau = n - 2$ and 
suppose that at time step $t = k$ we have somehow managed to decontaminate all the vertices of $K_n$ except 
one last vertex, say, $v_n$.
 Assume without loss of generality that the agent is at $v_{n-1}$. As long as the complete graph 
is not fully decontaminated, all the vertices which do not have the agent placed on 
them are exposed. This implies that the vertices $v_1, \ldots, v_{n-2}$ have all been visited by 
the agent in the last $n - 2$ time steps, that is, $\Xi(v_i) < n - 2$ for $1 \le i \le n - 2$. It also implies that since 
there is one agent, all these vertices
 have different exposure times, meaning that there is one vertex, say $v_1$, such that $\Xi(v_1) = n - 3$. At time step
 $k + 1$, if the agent moves to $v_n$ and decontaminates it, then $v_1$ becomes contaminated hence we make no progress; 
there is still one contaminated vertex remaining in the graph. If on the other hand agent $x$ is moved to $v_1$ to avoid 
its contamination, we will again have not made any progress. Moving the agent to any other vertex at $t = k+1$ actually increases 
the number of contaminated vertices in the graph.
\qed \end{proof}
The immunity number of complete bipartite graph depends upon the size of smaller partition.
\begin{thm}
\label{thmCompleteBipartite}
Let $G$ be a complete bipartite graph on the vertex sets $A$ and $B$ 
where $|A| = m, |B| = n$ such that $3 \leq m \leq n$, then $\iota(G) = 2m - 1$.
\end{thm}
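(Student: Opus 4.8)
The plan is to establish the two inequalities $\iota(G)\ge 2m-1$ and $\iota(G)\le 2m-1$ separately, reusing the exposure-counting idea behind Theorem~\ref{thmComplete} but exploiting the one structural feature that distinguishes a bipartite graph: the lone agent must cross from $A$ to $B$ (or back) at \emph{every} step, so the times at which it occupies vertices of a fixed side all have the same parity. Consequently, for any decontaminated vertex $v$ whose side does not currently hold the agent, the quantity $\Xi(v)$ equals the number of steps since the agent last stood on $v$, and this number is \emph{even}.

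For the lower bound I would look at the single step on which the graph becomes fully decontaminated. Let $u$ be the last vertex cleaned; the agent reaches $u$ from a neighbour on the opposite side, so immediately before this step the entire side $S$ not containing $u$ is clean while $u$ is still contaminated. Every vertex of $S$ other than the one holding the agent is then exposed (it is adjacent to $u$), and by the parity remark these $|S|-1$ vertices carry pairwise distinct, strictly positive, even exposure values; hence the largest of them is at least $2(|S|-1)$. Since a decontaminated vertex survives only while $\Xi\le\tau-1$, we need $\tau-1\ge 2(|S|-1)$. If $u\in B$ then $S=A$ and this reads $\tau\ge 2m-1$; if $u\in A$ then $S=B$ and it reads $\tau\ge 2n-1\ge 2m-1$. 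Either way $\iota(G)\ge 2m-1$.

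For the upper bound I would exhibit a strategy realising $\tau=2m-1$ whose tightest instant is exactly the configuration isolated above. The guiding idea is an alternating walk that postpones completing the visits to $A$ until a final \emph{sweep}: the agent cleans the vertices of $B$ in a prescribed order while weaving through $A$ so that, at the instant the last-but-one vertex of $B$ is cleaned, all of $A$ has just been refreshed and the oldest $A$-vertex sits at exposure exactly $2(m-1)=\tau-1$. Moving the agent onto the final contaminated vertex then makes every neighbourhood clean at once, resetting all exposures to $0$ and completing the decontamination one step before any vertex would expire. When $n\le m+1$ this sweep is a single Hamiltonian-type alternating walk and the verification is a direct bookkeeping of exposures, entirely analogous to the cycle and complete-graph computations.

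The part I expect to be the main obstacle is making this strategy go through for large $n$. A single sweep offers only about $m$ usable slots on the $B$-side, so once $n$ exceeds $m+1$ the agent cannot clean all of $B$ before the oldest decontaminated $A$-vertex reaches exposure $\tau$, while a naive repeated cycle through $A$ has period $2m>\tau$ and therefore expires a vertex on every lap. Overcoming this is precisely where the nonmonotone freedom advertised in the introduction must be used: I would let selected $A$-vertices lapse and be recleaned between sweeps, and schedule the two reset events, ``all of $A$ clean'' and ``all of $B$ clean'', so that the $B$-vertices cleaned in one phase are preserved by a global reset before they expire. Getting the accounting of these interleaved resets to close -- so that the very last step is the unique simultaneous reset forced by the lower bound -- is the crux of the argument, and I would organise it as an explicit phase-by-phase schedule carrying an invariant that bounds every exposure by $\tau-1$ at all times.
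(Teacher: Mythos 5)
Your lower-bound counting is essentially the paper's own idea (pairwise distinct exposure clocks forced at least $2$ apart by bipartiteness, so the oldest clean vertex of the $m$-side has exposure about $2(m-1)$, forcing $\tau \ge 2m-1$), but anchoring it at the \emph{final} step of the whole decontamination opens a genuine hole. Your parity remark asserts that $\Xi(v)$ equals the number of steps since the agent last stood on $v$; the model, however, also resets $\Xi(v)$ to $0$ whenever \emph{all} of $N(v)$ is simultaneously clean. In $K_{m,n}$ this means that if at some earlier moment the entire side containing $u$ (say all of $B$) happened to be clean while parts of $A$ were not, then every clean vertex of $A$ had its clock reset at that same instant; at time $T-1$ the exposures of the $A$-vertices are then neither ``time since last visit'' nor pairwise distinct, and the chain $2,4,\ldots,2(m-1)$ collapses. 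The paper closes exactly this hole by running the argument at the \emph{first} time any one partition becomes fully decontaminated: before that moment no vertex can ever have had all its neighbours simultaneously clean, so no resets have occurred and the counting is legitimate. Your argument needs the same restructuring (equivalently, an explicit remark that a reset itself presupposes an earlier fully clean side, so one may pass to the earliest such event).

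The upper bound, as you yourself concede, is not a proof: for $n>m+1$ you only sketch a phase scheme whose ``accounting of interleaved resets'' you flag as the unresolved crux. The obstacle you set up is in fact illusory, and seeing why yields the paper's simple \emph{monotone} strategy. Once every vertex of $A$ is clean, a clean vertex of $B$ has no contaminated neighbours at all (its neighbourhood is exactly $A$), hence accrues no exposure whatsoever; $B$-vertices need no ``global reset'' to be preserved. So after the opening interleaved pass $a_1,b_1,a_2,b_2,\ldots,a_m,b_m$, the agent simply keeps cycling through $A$, inserting one new $B$-vertex per lap: $a_1,b_{m+1},a_2,b_{m+2},\ldots,b_n$. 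Each $a_i$ is revisited with period exactly $2m$, which $\tau=2m-1$ tolerates under the paper's exposure accounting (calibrated by its $K_n$ proof, a vertex revisited after $2m$ steps peaks at $\Xi=2m-2=\tau-1$), and every cleaned $B$-vertex remains unexposed forever, so the strategy works for all $n$. Your assertion that a period-$2m$ cycle ``expires a vertex on every lap'' is an off-by-one reading of the model; even under that stricter reading the expired vertex is recleaned on the very next step while all $B$-clocks reset at each instant $A$ is momentarily all clean, so the same cycle still finishes. In short: the lower bound is the right idea with a repairable gap; the upper bound is missing the one structural observation that $B$ can only be exposed through $A$, which is what makes the construction easy.
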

\begin{proof}

Let $A = \{a_1, a_2, \ldots, a_m\}$ and $B = \{b_1, b_2, \ldots, b_n\}$. Set the temporal immunity $\tau = 2m - 1$ and place an agent 
at $a_1$ at $t=0$. Now we cycle through the vertices in $A$ and $B$ in an interleaved sequence as follows:
$$
a_1, b_1, a_2, b_2, a_3, b_2, \ldots, a_m, b_m, a_1, b_{m+1}, a_2, b_{m+2}, \ldots, b_n.
$$

When $t < 2m$ none of the vertices are exposed long enough to be recontaminated. At $t = 2m$ the agent returns to $a_1$, and thereafter
none of the decontaminated vertices in $B$ remain exposed while the vertices of $A$ keep getting visited by the agent before their
exposure time reaches $\tau$. It follows that this monotone strategy fully decontaminates $G$ in $2n -1$ time steps.

Our claim is that if $\tau < 2m - 1$ then it is not possible to fully decontaminate a partition during any stage of a given 
decontamination strategy. Consider a strategy that aims to fully decontaminate $A$ at some point (and $B$ is never fully decontaminated
before that).  Suppose that at time $t = k$ there remains exactly one contaminated vertex in $A$ (and that there were two contaminated vertices
in $A$ at $t = k - 1$). Note that this implies that the agent is at some vertex in $A$ at $t = k - 1$. Since $B$ has never 
fully been decontaminated, it follows that there exists a vertex $a_j \in A$ such that $\Xi(a_j) = 2m - 3$. 
Since it is a bipartite graph, it will take at least two additional time steps to reach the last contaminated 
vertex of $A$, and if the temporal immunity is less that $2m - 1$ the agent will fail to decontaminate $A$ fully.

In the case where the decontamination strategy requires that $B$ is fully decontaminated before 
$A$, similar argument gives us a lower bound of $2n - 1$ on $\iota(G)$ 
but we have already given a strategy that decontaminates $A$ first which gives a better upper bound.
\qed \end{proof}

\section{Spider, $k$-ary Tree, and Mesh Graph}
\label{secSpider}
Star and mesh are two important network topologies, which are extreme examples of centralization and decentralization respectively. In
 the following we study our problem on star, {\em spider} a generalization of star, $k$-ary trees, and mesh graphs. Some of the ideas and
proof techniques developed in this section will feature in proof of upper bound on immunity number for general trees in the next section.
\subsection{Spider}
Let $S$ be a star graph. The simple strategy of starting the agent at the center vertex and visiting each leaf in turn
 (via the center) gives us the best possible bound on $\iota(S_n)$.
\begin{lemma}
Temporal immunity $\tau = 1$ is necessary and sufficient for any star graph.
\end{lemma}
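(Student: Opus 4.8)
The plan is to prove the two directions separately: sufficiency by exhibiting an explicit single-agent strategy that succeeds with $\tau = 1$, and necessity by showing that $\tau = 0$ fails as soon as the star has at least three leaves. For $n \le 2$ the star $S_n$ is just a path, so $\iota(S_n) = 0$ by Proposition~\ref{propPath}; the substantive claim is therefore $\iota(S_n) = 1$ for $n \ge 3$, and I read the lemma in that sense.

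For sufficiency I would analyze the round-robin walk already suggested in the text: place the agent on the center $c$ at $t = 0$ and then follow $c, v_1, c, v_2, c, \ldots, c, v_n$, returning to $c$ after each leaf. Two facts need verification. First, every leaf, once the agent has stood on it, never becomes exposed again: a leaf's only neighbour is $c$, and the strategy keeps $c$ decontaminated throughout, so $N(v_i)$ is clean and $\Xi(v_i)$ stays $0$. Second, the center is left unvisited for exactly one time step on each excursion $c \to v_i \to c$, so $\Xi(c)$ never exceeds the tolerance permitted by $\tau = 1$ before the agent returns and resets it to $0$. Combining these, no vertex is ever recontaminated and all $n + 1$ vertices are simultaneously clean after roughly $2n$ steps, giving $\iota(S_n) \le 1$.

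For necessity I would argue that $\tau = 0$ admits no decontaminating strategy when $n \ge 3$. Under $\tau = 0$ the rule forbids any decontaminated vertex from ever being exposed, so the key structural observation is that whenever $c$ is clean and the agent is not on $c$, every leaf must already be clean (otherwise $c$ has a contaminated neighbour and is recontaminated at once); that is, such a configuration is itself a full decontamination. I would then examine the move that first produces a fully clean graph, at time $T$, and walk the argument back one or two steps. Reaching the required predecessor configuration forces the agent to sit on some leaf while $c$ is clean and at least one other leaf is still contaminated, which is precisely the forbidden exposure. Since $n \ge 3$ guarantees a third leaf that is neither the agent's current vertex nor the last remaining dirty leaf, this contradiction cannot be escaped, so $\tau \ge 1$ is necessary.

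The main obstacle I expect is pinning down the boundary behaviour of the timing model rather than the combinatorics. Both directions hinge on the exact meaning of ``exposed for $\tau$ time units'': I must confirm that with $\tau = 1$ the single step of exposure between consecutive visits to $c$ is genuinely tolerated (so the round trip survives), while with $\tau = 0$ the very first instant of exposure triggers recontamination. Once the convention $0 \le \Xi(v) \le \tau - 1$ is translated into a clean step-by-step recontamination rule, the invariant in the sufficiency argument and the backward descent in the necessity argument should both go through without further difficulty.
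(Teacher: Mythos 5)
Your proposal is correct. For sufficiency you use exactly the strategy the paper outlines (place the agent at the center and visit each leaf in turn, returning to the center after each one), and your two invariants --- the center stays clean so visited leaves are never re-exposed, and each excursion exposes the center for only one step --- are precisely what makes that strategy work under $\tau = 1$. For necessity the paper merely asserts the matching lower bound is straightforward and omits it; your backward argument from the first fully clean configuration is a sound way to supply it, and you correctly isolate both the role of $n \ge 3$ (there must be a third leaf besides the agent's vertex and the last dirty leaf) and the timing convention: with $\tau = 0$ a clean, unguarded vertex with a contaminated neighbor is recontaminated instantly, while with $\tau = 1$ an agent returning after a single step of exposure arrives just in time, which is consistent with how the paper handles timing in its cycle lower bound (Proposition~\ref{propCycle}). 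Your remark that the statement should be read as applying to stars with at least three leaves (otherwise the star is a path and $\tau = 0$ suffices by Proposition~\ref{propPath}) is also accurate.
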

\begin{proof}
The strategy outlined above gives us the upper bound of $\iota(S_n) \le 1$. Argument for matching lower bound is 
straightforward and we omit the details. 
\qed \end{proof}

A graph that is structurally closely related to a star graph is the \emph{spider}. A spider is a tree in which one vertex, called 
the \emph{root}, has degree at least $3$, and all the rest of the vertices have degree at most $2$. Another way to look at it is that a
 spider consists of $k$ vertex disjoint paths all of whose one endpoint is connected to a root vertex. Such a spider 
is said to have $k$ \emph{arms}.

Let $S$ be a spider such that the degree of the root is $\Delta$. If $m$ is the length of the longest arm of $S$ then using a naive 
monotone strategy of visiting each arm sequentially, starting at the root and traversing each arm to the end and returning to
 the root shows that temporal immunity $\tau = 2m$ is enough to fully decontaminate $S$. A better bound may be obtained if we 
allow nonmonotonicity. This time we set $\tau = m$ and fully decontaminate each arm of the spider in turn and keep doing so until 
the entire spider is decontaminated. It is easy to verify that eventually (after possibly multiple rounds) this process ends.
 However one can obtain an even better estimate on $\iota(S)$.
\begin{thm}
Let $S$ be a spider on $n$ vertices such that the degree of 
the root is $\Delta$. If $m$ is the length of the longest arm of $S$ then $\iota(S) \le \Delta + \sqrt{\Delta^2 + 4m}$.
\label{thmSpider}
\end{thm}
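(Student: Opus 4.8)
The plan is to exhibit a single-agent nonmonotone strategy that fully decontaminates $S$ whenever the temporal immunity is $\tau = \Delta + \sqrt{\Delta^2 + 4m}$, and to reduce its correctness to one balancing inequality. First I would record the algebraic identity that drives everything: writing $h = \tau/2$, the value $\tau = \Delta + \sqrt{\Delta^2+4m}$ is exactly the one for which
\[
h(h-\Delta) = m, \qquad \text{equivalently}\qquad \tau^2 - 2\Delta\tau = 4m .
\]
The point is that $h=\tau/2$ is simultaneously the deepest one-way reach that an excursion from the root can have while still returning within the immunity window (a round trip to depth $d$ costs $2d$ steps, so $2d\le\tau$ forces $d\le h$), and the quantity we must compare the longest arm $m$ against. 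The analysis is organized around keeping the \emph{root} clean, since it is the only genuinely vulnerable vertex: an arm that is clean on its initial segment up to depth $d\ge 1$ already shields the root, and an arm cleaned all the way out to its leaf is permanently stable, because its leaf then has no contaminated neighbour.

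The strategy rests on two ideas. First, the agent keeps every arm cleaned to some positive frontier depth, so that all of the root's neighbours are clean and the root never becomes exposed; the only vertices whose exposure time must then be controlled are the $\Delta$ frontiers, one per arm, each of which recedes by exactly one vertex per $\tau$ unattended time steps. Second, the agent advances the frontiers cyclically: it repeatedly tours the arms, and on each visit to an arm it re-cleans whatever receded since its last visit and then pushes that frontier strictly further out, continuing until all arms reach their leaves. I would describe this in two phases, a short bootstrapping phase that first brings every arm to a safe positive frontier, followed by the main cyclic phase that drives all frontiers out to the leaves.

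The heart of the proof is the balancing argument, which plays the reach cap against the servicing cost. When the common frontier depth is $d$, a full tour that visits all $\Delta$ frontiers and returns costs about $2\Delta d$ steps, so between two successive visits a given frontier is neglected for about $2\Delta d$ steps and hence recedes by about $2\Delta d/\tau$ vertices, while the reach cap limits how far any single excursion can push it, namely to depth at most $h$. Tracking the net depth gained against the depth lost to recession as the frontiers are driven from the root out to depth $m$ yields a bound of the shape $(\text{reach})\times(\text{reach}-\text{servicing})\ge m$, that is precisely $h(h-\Delta)\ge m$, which by the identity above is $\tau\ge\Delta+\sqrt{\Delta^2+4m}$. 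I would make this rigorous through a potential function equal to the total cleaned depth summed over all arms, showing that it increases on every tour; the same potential also forces termination in finitely many tours.

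I expect the main obstacle to be the simultaneous recontamination bookkeeping rather than any single inequality. Because tours grow longer as the frontiers deepen, and because the arm being actively extended is neglecting all the others during its extension, one must track the exposure times of all $\Delta$ frontiers and of the root at once and verify that none ever reaches the immunity limit $\tau$; in particular the near-root segment of every arm must stay clean throughout so that the root is never exposed, and the initial bootstrapping to a safe positive frontier must itself be shown feasible under the same $\tau$. Accommodating arms of unequal length (only the longest, $m$, appears in the bound) and making the strictly-outward-per-tour progress argument tight enough to guarantee finite termination are the remaining technical points; once the balancing inequality $h(h-\Delta)\ge m$ is established, these reduce to careful but routine accounting.
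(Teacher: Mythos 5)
Your proposal is not the paper's strategy, and unfortunately it contains a genuine gap: the ``reach cap'' $d\le h=\tau/2$ is both unjustified and self-defeating. Nothing in the model forces a round trip from the root to finish within $\tau$ steps. Once every arm has a clean prefix, the true constraint on an excursion into one arm is that the frontiers of the \emph{other} arms (which recede toward the root at rate $1/\tau$) must not reach depth $0$; this permits excursions of length up to roughly $\tau\cdot(f_{\min}-1)$, where $f_{\min}$ is the smallest frontier depth among the other arms --- a bound that grows as the frontiers deepen, not a fixed $\tau$. Conversely, if you do enforce $d\le\tau/2$, your agent never visits any vertex deeper than $\tau/2$, and for long arms one has $m>\tau/2$ (e.g.\ $\Delta=3$, $m=100$ gives $\tau/2\approx 11.6$), so the frontiers can never be ``driven out to depth $m$'': vertices beyond depth $\tau/2$ are simply never decontaminated. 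Thus the two ingredients you multiply to get $h(h-\Delta)\ge m$ cannot coexist; the inequality is matched to the theorem's algebra rather than derived from the strategy. Indeed, a correct accounting of your cyclic scheme (per-tour recession about $2\Delta f'/\tau$ against an essentially unbounded push) suggests that, after bootstrapping, it succeeds whenever $\tau$ exceeds roughly $2\Delta$ plus lower-order terms, \emph{independently of} $m$ --- a bound of a different shape, which again shows the sketch does not produce the claimed inequality. The bootstrap phase, which you defer as routine, is actually where the binding constraints live.

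For contrast, here is where $\tau/2$ legitimately enters the paper's argument, whose structure is different from yours. In iteration $j$ the agent walks the new arm $A_j$ all the way to its leaf (a trip of length up to $2m\gg\tau$, exactly what your cap forbids), accepts that the root is recontaminated \emph{once} during this trip, and contamination then creeps \emph{outward from the root} into the previously cleaned arms at rate $1/\tau$ (not inward from the leaves, as in your picture: a fully cleaned arm has no contaminated leaf-side neighbor). On returning, the agent re-cleans the root and repairs those contaminated near-root prefixes one arm at a time; each repair trip must have depth at most $\tau/2$ so that its round trip takes at most $\tau$, and since the root's exposure clock resets at every return, the root is never recontaminated again in that iteration. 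The balancing inequality is then: the prefix of the last-repaired arm grows one vertex per $\tau$ steps over a window of length at most $2m+(\Delta-1)\tau$, and must stay below $\tau/2$, i.e.\ $2m+(\Delta-1)\tau\le\tau^{2}/2$, which holds for $\tau=\Delta+\sqrt{\Delta^{2}+4m}$. If you wish to salvage your cyclic approach, replace the reach cap by the frontier-positivity constraint and analyze the bootstrap separately; but that yields a different theorem, not a proof of this one.
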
 
\begin{proof}
Arbitrarily order the arms of the spider $A_1, A_2, \ldots, A_{\Delta}$ and let the temporal immunity $\tau = t_0$, that is, the agent,
 when starting from the root, can decontaminate $t_0$ vertices on an arm before
 the exposed root gets recontaminated. Our strategy is going to be iterative and in each iteration, we are going to let the root
 get contaminated just once in the beginning, and after we decontaminate it, we will make sure that it does not get recontaminated 
during the course of that iteration. At the end of each iteration, $j$, we will have decontaminated all the arms of the spider 
from $A_1$ to $A_j$ including the root. Since this is going to be a nonmonotone strategy, parts or whole of these arms may be 
recontaminated during the course of the rest of the algorithm.

At the first iteration we start from the root, traverse $A_1$ to the end and return to the root. We proceed to decontaminate 
the rest of the spider using the following strategy. At the beginning of $j$th iteration, our agent is at the root of the spider
 and all the arms from $A_1, \ldots, A_{j-1}$ are fully decontaminated whereas $A_j, \ldots, l_\Delta$ are all fully contaminated 
(except for the root). The agent traverses each arm of the spider up to the farthest contaminated vertex and returns 
to the root in sequence starting from $A_j$ down to $A_1$. Note that, as mentioned before, we will allow the root to get contaminated
 just once in this iteration, that is, when our agent is traversing $A_j$. We want to fine tune our the temporal immunity $\tau$ 
such that 
once the agent returns after visiting all the vertices in $A_j$, during the rest of the iteration when the agent is visiting other
 arms, the root never gets contaminated again.
\begin{figure}[ht]
\begin{center}
\includegraphics[height=3in]{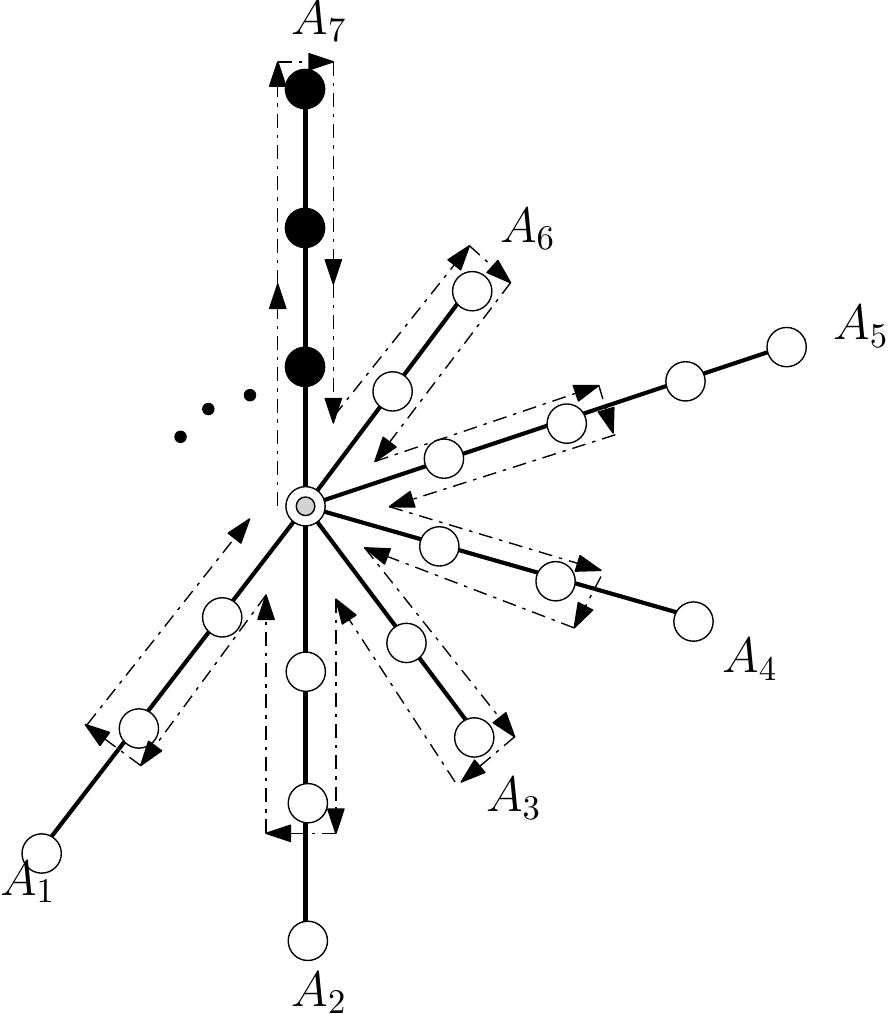}
\caption{{The agent is at the root. Arms $A_1,\ldots,A_6$ are decontaminated, represented as white dots. Dotted 
line segments show the path followed by the agent in $7$th iteration to decontaminate $A_7$.}}
\label{spider}
\end{center}
\end{figure}

Let $t_1$ be the total time needed to traverse the arms $A_j, \ldots, A_2$ after the root has been recontaminated (when the
 agent reached vertex $t_0$ of $A_j$). Then
\begin{equation}
t_1 < 2m + 2(j-1)\times \frac{t_0}{2}
\end{equation}
where the last term is the result of the constraint that the root may not be recontaminated in the current iteration. Now 
during the time $t_1$ at most $t_0/2$ vertices of $A_1$ should have been contaminated (once again to avoid recontamination 
of the root when we visit $A_1$). But that would have taken $t_0^2/2$ time units, therefore:
\begin{equation}\label{spiderBound}
t_1 = \frac{t_0^2}{2} \le 2m + 2(j-1)\times \frac{t_0}{2}. 
\end{equation}

Solving (\ref{spiderBound}) and using the fact that we get the worst bound at $j = \Delta$ we conclude that
$$
\tau = t_0 < \Delta + \sqrt{\Delta^2 + 4m}.
$$
\qed \end{proof}

\begin{corollary}
\label{corSpider}
If $S$ is a spider on $n$ vertices then $\iota(S) = O(\sqrt{n})$.
\end{corollary}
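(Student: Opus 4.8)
The plan is to derive the bound from the \emph{proof} of Theorem~\ref{thmSpider}, not from its statement: the stated estimate $\Delta+\sqrt{\Delta^2+4m}$ is linear in the root degree $\Delta$, so for a spider with large $\Delta$ (a star, with $\Delta=n-1$ and $m=1$, already gives $\approx 2n$) a black-box application cannot produce an $O(\sqrt n)$ bound. Only in the regime $\Delta\le\sqrt n$ does the statement suffice directly, since then $m\le n$ gives $\iota(S)\le \sqrt n+\sqrt{5n}=(1+\sqrt5)\sqrt n$. The remaining work is to remove the $\Delta$-dependence in general, and the quantity to control is the time $t_1$ the agent spends re-cleaning recontaminated arms within a single iteration, which the proof bounds crudely by $(\Delta-1)t_0$.

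The crux is to bound $t_1$ by the \emph{total} arm length instead. Let $\ell_1,\dots,\ell_\Delta$ be the arm lengths, so $\sum_i\ell_i=n-1$, and let $\tau=t_0$. In one iteration the agent cleans the new arm and re-cleans the recontaminated prefixes of the others by a single out-and-back excursion into each arm, and the depth reached in arm $A_i$ never exceeds $\ell_i$; hence
\[
t_1 \;\le\; 2\sum_i \ell_i \;=\; 2(n-1),
\]
with no dependence on $\Delta$. The proof of Theorem~\ref{thmSpider} only needs this traversal to finish before the most-exposed cleaned arm accumulates $t_0/2$ recontaminated vertices --- that is, before time $t_0^2/2$ (contamination creeps one vertex per $t_0$ steps). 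It therefore suffices to take $\tau=t_0=2\sqrt n$: then $t_0^2/2=2n\ge 2(n-1)\ge t_1$, the governing inequality $t_1\le t_0^2/2$ holds, and the strategy goes through, giving $\iota(S)\le 2\sqrt n=O(\sqrt n)$ uniformly over all spiders.

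The step I expect to be the main obstacle is verifying that the per-iteration invariant of the theorem's strategy survives this global choice of $t_0$: one must check that, with $\tau=2\sqrt n$ fixed in advance (rather than tuned through the $\Delta$-dependent formula), the root still suffers only a single recontamination per iteration and the most-exposed arm $A_1$ indeed accrues at most $r_1\le t_1/t_0\le (n-1)/\sqrt n<t_0/2$ recontaminated vertices, so that its final re-cleaning round trip does not itself recontaminate the root. Once this bookkeeping is confirmed, replacing $(\Delta-1)t_0$ by $2\sum_i\ell_i\le 2(n-1)$ is exactly the trade that converts the $\Delta$-dependent estimate into the clean $O(\sqrt n)$ bound.
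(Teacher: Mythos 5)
Your proof is correct, but it takes a genuinely different route from the paper's. You go inside the proof of Theorem~\ref{thmSpider} and redo the accounting of the re-cleaning phase: instead of bounding each excursion into a previously clean arm by $t_0$ (via the constraint $d_i<t_0/2$), you bound the excursion into arm $A_i$ by $2\ell_i$, so one whole iteration costs at most $2\sum_i \ell_i\le 2(n-1)$ steps; then the single governing requirement $t_1\le t_0^2/2$ (which keeps every creep depth below $t_0/2$ and hence protects the root) is met by $t_0=\Theta(\sqrt n)$, with the $\Delta$-dependence gone. The paper does not touch the theorem's proof; it splits the spider instead: the arms of length at least $\sqrt n$ number at most $\sqrt n$ by pigeonhole, so the sub-spider $S^*$ they form has root degree at most $\sqrt n$ and Theorem~\ref{thmSpider} applies to it with $\tau\le 4\sqrt n$; the remaining arms all have length below $\sqrt n$, so the sub-spider they form has height below $\sqrt n$ and is finished off by the monotone strategy of Lemma~\ref{lemmak-ary} with $\tau<2\sqrt n$, monotonicity (the agent returning to the root every $<2\sqrt n$ steps) guaranteeing that $S^*$ is never recontaminated. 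Your version is more self-contained --- one strategy, no case split, no appeal to Lemma~\ref{lemmak-ary} --- and it effectively strengthens Theorem~\ref{thmSpider} itself to $\iota(S)=O\bigl(\sqrt{\textstyle\sum_i\ell_i}\,\bigr)$ uniformly over all spiders; the paper's split, on the other hand, is the template (few long arms handled nonmonotonically, many short subtrees handled monotonically) that is reused for general trees in Theorem~\ref{thmTreesUpperBound}. One caveat on your constant: with $\tau=2\sqrt n$ exactly, the lower-order effects you defer (the contamination front advances while the agent chases it, so an excursion to depth $d_i$ costs slightly more than $2d_i$, and root protection really needs $2d_i\,t_0/(t_0-1)<t_0$) make the bookkeeping tight to the point of failing by a constant; taking $\tau=3\sqrt n$ absorbs all of it. Since the paper's own inequalities carry the same slop, this affects only your advertised constant, not the validity of the $O(\sqrt n)$ claim.
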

\begin{proof}
Let $S$ be rooted at a vertex $r$. If $deg(r) = \Delta\leq \sqrt{n}$, if follows from Theorem~\ref{thmSpider} that
$\iota(S)\leq \Delta + \sqrt{\Delta^2 + 4m} \leq 4\sqrt{n}$ which gives the claim. So, without loss of generality,
 assume that $\Delta> \sqrt{n}$.\\
Let $A_1, A_2,\ldots, A_\Delta$ be arms of $S$ with $|A_i|\leq |A_j|,$ for all $i\leq j$. Again without loss of generality
for some $k$\\
$$
|A_i| \begin{cases} <\sqrt{n} &\mbox{if } 1\leq i \leq k \\
\geq \sqrt{n} & \mbox{if } k<i\leq m \end{cases}
$$\\
Now consider a modified spider $S^* = S\setminus \bigcup_{1\leq i \leq k} A_i$ with $r$ as root. By pigeon hole principle,
$\Delta(S^*)\leq \sqrt{n}$. So we can apply technique used in proof of Theorem~\ref{thmSpider} 
to decontaminate $S^*$ with $\tau \leq 4\sqrt{n}$. Once $S^*$ is decontaminated, we use Lemma~\ref{lemmak-ary} to decontaminate
$(S\setminus S^*)\cup \{r\}$. Bound follows because height of this tree is less $\sqrt{n}$ and already decontaminated
$S$ never gets recontaminated by monotonicity in Lemma~\ref{lemmak-ary}.
\qed \end{proof}

%}
%{\color{red} some connecting stories}
\subsection{$k$-ary Tree}

\begin{lemma}\label{k-ary}
Any $k$-ary tree $T$ with height $h$ can be decontaminated with $\tau = 2h - 1$ using a monotone algorithm. 
\label{lemmak-ary}
\end{lemma}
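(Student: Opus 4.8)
The plan is to give a monotone decontamination algorithm for a $k$-ary tree $T$ of height $h$ and show that temporal immunity $\tau = 2h-1$ suffices. The natural approach is a recursive, depth-first sweep of the tree: decontaminate one subtree completely, return toward the root, and move on to the next subtree, arranging the visit order so that no already-cleaned vertex stays exposed for $2h-1$ consecutive time units. Since we want a monotone strategy, the crux is to prove that each cleaned vertex is re-visited (or has all its neighbors cleaned) before its exposure counter $\Xi$ reaches $\tau$.

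First I would set up the recursion precisely. Root $T$ at its top vertex $r$ and process the children subtrees of each internal vertex one at a time. The key quantity to track is, for a cleaned vertex $v$ at depth $d$, how long the agent can be away from $v$ while $v$ still has a contaminated neighbor. A cleaned internal vertex $v$ is exposed exactly when the agent has descended into one of $v$'s not-yet-cleaned child subtrees (or when its parent edge still leads to contamination). So I would bound the round-trip time the agent spends away from $v$ while clearing a single child subtree rooted at a child $c$: after fully cleaning the subtree below $c$ and returning to $v$, the elapsed time is twice the number of edges traversed in that subtree. The worst case is a subtree whose deepest leaf is at depth $h$, i.e. a pendant path of length at most $h-d$ hanging below $c$; traversing down and back costs at most $2(h-d)$ steps. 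I expect the clean accounting to show that between consecutive visits to $v$, the agent is away for at most $2(h-d) \le 2(h-1)$ time steps, and I would argue that $v$ is re-visited on the way to its next uncleaned child, so its exposure never accumulates past $2h-2 = \tau - 1$, which by the model's constraint $0 \le \Xi(v) \le \tau(G)-1$ is exactly safe.

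The subtle point, and the step I expect to be the main obstacle, is handling a vertex $v$ that is momentarily exposed through its \emph{parent} edge rather than through a child: when the agent finishes $v$'s subtree and climbs back above $v$, the parent may still be contaminated, or siblings of $v$ may be contaminated so that $v$'s parent is unclean and $v$ becomes exposed from above. I would resolve this by ordering the DFS so that once the agent leaves a subtree for good, that subtree's boundary vertices have all their remaining neighbors either already cleaned or about to be cleaned within the immunity window; equivalently, I would argue the exposure charged to any vertex is dominated by the single longest downward excursion beneath it, and that upward exposure is absorbed because the agent returns to and passes through each ancestor on its way between sibling subtrees. Making this precise is essentially a statement that the maximum gap between successive visits to any fixed vertex is at most $2h-2$.

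To finish, I would verify the base case (a single leaf, or $h=1$, needing $\tau=1$, consistent with the star lemma) and then assemble the bound: the longest stretch during which any cleaned vertex is continuously exposed is at most $2(h-1) = 2h-2 = \tau - 1 < \tau$, so no cleaned vertex is ever recontaminated, establishing monotonicity and hence correctness with $\tau = 2h-1$. I would present the excursion-length accounting as the central lemma and keep the recursion bookkeeping light, since it is routine once the worst-case gap of $2h-2$ between revisits is established.
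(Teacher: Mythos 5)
Your strategy is not the one that achieves the bound, and the accounting step you yourself flag as the crux is exactly where it breaks. You propose a recursive depth-first sweep: fully decontaminate one child subtree, return to its parent, then move on to the next subtree. Under that strategy, when the agent descends into a child $c$ of a vertex $v$ and cleans the whole subtree below $c$ before returning, the time spent away from $v$ is twice the number of edges of that subtree --- proportional to its \emph{size}, not its height. Your claim that ``the worst case is a pendant path of length at most $h-d$'' inverts the situation: a pendant path is the \emph{best} case (fewest vertices of a given height), while a bushy subtree of height $h-d$ in a $k$-ary tree can contain $\Theta(k^{h-d})$ vertices, and during its entire sweep $v$ stays exposed through its other, still-contaminated children. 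Concretely, take the perfect binary tree of height $2$ with root $r$, children $a,b$, and leaves $a_1,a_2,b_1,b_2$, and set $\tau = 2h-1 = 3$. Your sweep $r,a,a_1,a,a_2,a,r,b,\ldots$ leaves the root unvisited from $t=0$ until $t=6$ while its neighbor $b$ is contaminated, so the root is recontaminated before the agent returns. In general your sweep requires $\tau = \Theta(n)$, which is just the trivial spanning-tree bound from the paper's opening observation, not $2h-1$.

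The missing idea is that the agent must return all the way to the root after \emph{every single leaf}, not after finishing a whole subtree. The paper's strategy visits the leaves $l_1, l_2, \ldots$ in depth-first discovery order, but each excursion is one root-to-leaf round trip of length at most $2h$. This caps the gap between consecutive visits to every vertex on the current root-to-leaf path at $2h$ (at most $2d$ while the agent is above a depth-$d$ vertex, at most $2(h-d)$ while it is below), which is exactly what $\tau = 2h-1$ tolerates. The DFS ordering of the leaves then does the remaining work: all leaves below a vertex $v$ are handled consecutively, so once the last of them is done, every neighbor of $v$ is clean, $v$ is never exposed again, and monotonicity follows --- this is also why leaves, whose only neighbor is their parent, are never exposed after being visited. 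Your instinct that the quantity to control is the maximum gap between successive visits to a fixed vertex is correct; the strategy you chose simply does not control it.
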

\begin{proof}
First label the leaf vertices of $T$ so that $l_1, l_2, l_3, \ldots$ represents the order in which the leaves 
are visited if an in-order depth first traversal is performed on $T$ starting from the root vertex. Now it is 
straightforward to verify that if we start with the agent at the root, and visit each leaf in order $l_1, l_2, l_3, \dots$ 
returning to the root every time before visiting the next leaf, then $\tau = 2h - 1$ would be enough to decontaminate the 
entire $k$-ary tree. Also note that once decontaminated any leaf $l_i$ is never exposed again, and 
all nonleaf vertices, once decontaminated, are exposed for at most $2h-1$ time units.
Monotonicity follows.
\qed \end{proof}

Lemma \ref{k-ary} gives us the following corollary.
\begin{corollary}
Let $T$ be a perfect $k$-ary tree on $n$ vertices, then $\iota(T) = O(\log n)$.
\end{corollary}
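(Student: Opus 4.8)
The plan is to combine Lemma~\ref{k-ary} with the standard counting relation between the height and the number of vertices of a perfect $k$-ary tree. Since the lemma already furnishes a monotone decontamination strategy achieving $\tau = 2h - 1$ for any $k$-ary tree of height $h$, the only work remaining is to bound the height $h$ in terms of $n$.

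First I would count vertices level by level. A perfect $k$-ary tree of height $h$ has exactly $k^i$ vertices at depth $i$, so the deepest level alone contributes $k^h$ leaves. Hence $n \ge k^h$, and since a genuine branching tree has $k \ge 2$, this already gives $n \ge 2^h$, equivalently $h \le \log_2 n$. One could instead use the exact identity $n = (k^{h+1}-1)/(k-1)$ and solve for $h$, but the crude leaf count suffices and has the pleasant feature that the resulting bound on $h$ is uniform in $k$.

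Substituting into Lemma~\ref{k-ary} then yields $\iota(T) \le 2h - 1 \le 2\log_2 n - 1 = O(\log n)$, which is exactly the claim; I would remark explicitly that the hidden constant is independent of the arity $k$. There is no genuine obstacle here: the corollary is an immediate consequence of Lemma~\ref{k-ary} once the elementary height-versus-size estimate is in place, and the only subtlety worth flagging is to phrase the counting step so that the bound stays independent of $k$.
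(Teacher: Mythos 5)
Your proof is correct and follows exactly the route the paper intends: the paper derives this corollary directly from Lemma~\ref{k-ary} by noting that a perfect $k$-ary tree on $n$ vertices has height $h = O(\log n)$, which is precisely your counting step. Your explicit remark that the bound $h \le \log_2 n$ (and hence the hidden constant) is uniform in $k \ge 2$ is a nice touch the paper leaves implicit.
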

In case of a binary tree the bound on temporal immunity can be slightly improved if we use the above strategy to first fully 
decontaminate the subtree rooted at the left child of the root, and then use the same method to decontaminate the 
subtree rooted at the right child of the root.
\begin{obs}
A binary tree with height $h$ can be decontaminated with an temporal immunity of $2h - 3$.
\end{obs}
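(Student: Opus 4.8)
The plan is to exploit the decomposition of a binary tree at its root, reducing the problem to cleaning two subtrees each of height at most $h-1$, for which Lemma~\ref{lemmak-ary} already supplies a strategy with temporal immunity $2(h-1)-1 = 2h-3$. Let $r$ be the root, with left child $u$ and right child $w$, and let $T_L$ and $T_R$ be the subtrees rooted at $u$ and $w$; each has height at most $h-1$, and at least one of them has height exactly $h-1$, so $2h-3$ is the binding value (we may assume $h\ge 2$, since $h=1$ is a star). The reason we beat the generic bound $2h-1$ is that the root never has to be kept clean during the long round trip to the deepest leaf: that round trip now takes place entirely inside one subtree whose local root is $u$ (or $w$), one step closer to the leaves than $r$.

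First I would run the strategy of Lemma~\ref{lemmak-ary} on $T_L$ alone, treating $u$ as its local root; since the height of $T_L$ is at most $h-1$, temporal immunity $2h-3$ keeps every vertex of $T_L$, in particular $u$, clean, even though the agent never walks back as far as $r$. During this first phase I would simply let $r$ become recontaminated. This is harmless: the only neighbours of $r$ are $u$ and $w$, the agent revisits $u$ within every stretch of at most $2h-3$ exposed steps (the extra contaminated neighbour $r$ does not lengthen $u$'s maximal unvisited-and-exposed stretch, which is governed only by the agent's visiting schedule inside $T_L$, so $u$ survives), and $w$ already lies in the still-contaminated $T_R$. Thus at the end of phase one $T_L$ is fully clean with the agent at $u$, while $r$ and $T_R$ are contaminated.

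In the second phase I would move the agent from $u$ to $r$ (cleaning $r$) and then to $w$, and run the Lemma~\ref{lemmak-ary} strategy on $T_R$ with $w$ as local root, again under temporal immunity $2h-3$. The point to verify is that $T_L$ and $r$ persist clean throughout. Because $w$ is the local root of $T_R$ it is revisited before its exposure can reach the threshold, so it is never recontaminated; and $u$ sits inside the already-clean $T_L$. Hence both neighbours of $r$, namely $u$ and $w$, stay clean for the entire second phase (apart from the single transition step where the agent walks $r\to w$, during which $r$ is exposed for just one step), so $r$ is never exposed long enough to recontaminate, its clock resetting by the ``all neighbours decontaminated'' rule. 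With $r$ clean, $u$ is never exposed either, so all of $T_L$ remains clean, and at the end every vertex is simultaneously decontaminated.

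The only real subtlety is the bookkeeping at the phase boundary together with the claim that recontaminating $r$ in phase one does no lasting damage; both reduce to the facts that $r$ has exactly two neighbours and that each subtree root is revisited frequently enough under the Lemma~\ref{lemmak-ary} schedule. I expect the main obstacle to be stating the exposure-clock resets for $r$ and $u$ carefully enough that the argument is airtight at the transition step, but no new idea beyond Lemma~\ref{lemmak-ary} is needed.
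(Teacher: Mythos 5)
Your proposal is correct and takes essentially the same approach as the paper, which likewise splits the tree at the root and uses the strategy of Lemma~\ref{lemmak-ary} to first fully decontaminate the subtree rooted at the left child (height $h-1$, hence $\tau = 2(h-1)-1 = 2h-3$) and then the subtree rooted at the right child. Your careful bookkeeping of the exposure clocks of the root and the two subtree roots simply makes explicit what the paper leaves implicit.
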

\subsection{Mesh}
A $p\times q$ mesh is a graph that consists of $pq$ vertices. It is convenient to work with planar drawing of the graph where 
the vertices of $G=(V,E)$ are embedded on the integer coordinates of Cartesian plane. The vertices are named $v_{(i,j)}$ for
 $1 \leq i \leq q$, $1 \leq j \leq p$ corresponding to their coordinates in the planar embedding. There is an edge between
a pair of vertices if their euclidean distance is exactly $1$. We can partition $V$ into $C_1, C_2, \ldots, C_q$ 
column sets so that $v_{(i, j)} \in C_i$ 
for $1 \leq i \leq q$ and $1 \leq j \leq p$. Row sets of vertices $R_1, R_2, \ldots, R_p$ are defined analogously.

A simple approach to fully decontaminate a $p \times q$ mesh would be to place our agent at $v_{(1,1)}$ at $t = 0$, proceed to 
visit all vertices in the column till we reach $v_{(1, p)}$, move right one step to $v_{(2, p)}$ and proceed all the way 
down to $v_{(2, 1)}$. This process may now be continued by moving the agent to $v_{(3, 1)}$ and going on to decontaminate the 
entire graph column by column until we reach the last vertex. Clearly an temporal immunity of $2p - 1$ is enough for this strategy 
to monotonically decontaminate the entire graph. In \cite{daadaa2012network} the same strategy was used, albeit under 
a slightly different 
notion of \emph{temporal immunity}, to get a similar upper bound. However, once again we can improve this bound by resorting to a
 nonmonotone strategy. 

\begin{thm}
\label{meshUpperbound}
Let $G$ be a $p \times q$ mesh where $p \leq q$, then $\iota(G) \leq p$.
\end{thm}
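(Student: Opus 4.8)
The plan is to beat the naive column-snake, which is exactly where the bound $2p-1$ comes from. First I would pin down precisely why the snake is wasteful: a decontaminated vertex $v$ has its exposure clock $\Xi(v)$ reset only when the agent revisits $v$ or when all of $N(v)$ becomes clean, so the exposure of $v$ is governed by the real-time gap between the moment $v$ is cleaned and the moment its \emph{last} still-dirty neighbour is cleaned. In the boustrophedon this gap is $2p-1$ for a corner vertex such as $v_{(i,1)}$, because it ages during the whole $(p-1)$-step sweep of its own column and then again during the whole $(p-1)$-step sweep of the next column before $v_{(i+1,1)}$ is finally reached. The goal is to halve this gap down to $p$.

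The key observation I would build the construction on is that the target value $p$ is exactly the smallest gap attainable in principle. Under the labelling that numbers the vertices column by column, $t_{(i,j)}=(i-1)p+j$, every edge of the mesh joins two vertices whose labels differ by at most $p$: the within-column edges $v_{(i,j)}\,v_{(i,j+1)}$ by $1$, and the between-column edges $v_{(i,j)}\,v_{(i+1,j)}$ by exactly $p$. Hence, \emph{if} the agent could clean the vertices one per step in this column order, each clean vertex would see its last dirty neighbour cleaned within $p$ steps, so $\Xi$ would never exceed $p-1$ and $\tau=p$ would suffice. The whole difficulty — and the reason the snake is strictly worse — is that column order is not realisable by an edge-walk, since the top of column $i$ is not adjacent to the bottom of column $i+1$, and any turn-around reintroduces the extra $\approx p$ steps. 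This is precisely the kind of gap that monotonicity cannot close, so I would exploit nonmonotonicity to emulate column order.

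Concretely, I would have the agent sweep the columns in a boustrophedon that advances one column per phase, and in each phase, rather than paying the full turn-around penalty, I would let the lagging end of the just-cleaned column recontaminate and then repair it by a short excursion launched from the adjacent, already-clean column, which sits within $p$ steps of it. The proof would then consist of: (i) fixing the bookkeeping for $\Xi$ and the two reset rules; (ii) specifying the phase movement (clean the new column while revisiting the previous one, followed by the repair excursion); (iii) maintaining an invariant on the exposure profile of the current frontier column and proving by induction over the phases that every vertex meant to be clean at the end of a phase has $\Xi(v)\le p-1$; and (iv) concluding that after the final column the whole mesh is simultaneously clean, so $\iota(G)\le p$.

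The main obstacle is step (iii): showing rigorously that the factor-two saving survives. I would have to prove that the recontamination introduced by the turn-arounds is strictly local — confined to a bounded portion of a single column — and that the repair excursions never cascade, i.e.\ that repairing the bottom of column $i$ does not push its top, or any vertex of column $i+1$, past exposure $p$. The borderline arithmetic is in our favour, since the critical between-column gap is exactly $p=\tau$ and the refresh coincides with the instant $\Xi$ would reach $p$ (the same borderline that already makes the snake work at $2p-1$), but making the scheduling explicit so that this coincidence holds for \emph{every} vertical edge is the crux. Handling the two extreme columns and the parity of $p$ in the phase pattern are the remaining technical points; I expect these to affect only lower-order details and not the bound $p$ itself.
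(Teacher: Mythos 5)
Your outline matches the paper's strategy in spirit---a nonmonotone, column-per-phase sweep in which a controlled part of a column is sacrificed and then repaired---but the proposal stops exactly where the proof starts. Everything you label as step (iii) and ``the crux'' (where the agent turns around, how deep the excursion into the next column goes, which vertices are allowed to exceed exposure $\tau$, and why the repair never cascades) is the entire content of the theorem, and none of it is routine. Moreover, your guiding heuristic is misleading as a mechanism: in any realizable schedule of this kind each phase costs roughly $2p$ moves, so many vertices see their ``last dirty neighbour'' cleaned far more than $p$ steps after they were cleaned. The bound holds not because every such gap is at most $p$, but because the vertices whose gaps exceed $\tau$ are exactly the ones deliberately recontaminated and re-cleaned within the same phase, while vertices in already-secured columns are protected by the second reset rule (a vertex's clock also resets when \emph{all} of its neighbours become clean). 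That reset rule, which you mention only in passing, is what actually prevents the cascade you worry about.

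The concrete schedule the paper uses---the piece missing from your proposal---is this. Maintain the invariant that when the phase for column $C_{k+1}$ begins, columns $C_1,\ldots,C_k$ are secured, the agent sits at $v_{(k,1)}$, and $\Xi(v_{(k+1,p)})=p-1$. The agent steps to $v_{(k+1,1)}$; at that instant $v_{(k+1,p)}$ recontaminates and the corner $v_{(k,p)}$ becomes exposed. The agent then walks straight up $C_{k+1}$, reaching $v_{(k+1,p)}$ after $p-1$ further steps, so $v_{(k,p)}$ is saved with exposure $p-1<\tau$, and its clock resets because all of its neighbours are clean at that moment. Next the agent crosses to $v_{(k+2,p)}$, descends only to row $\lceil p/2\rceil+1$, crosses back into $C_{k+1}$, and descends to $v_{(k+1,1)}$, repairing the lower half of $C_{k+1}$ (which meanwhile began recontaminating from the bottom) just in time to protect the bottom of $C_k$; this half-column turn-around depth is precisely what makes both timing constraints hold simultaneously, and it re-establishes $\Xi(v_{(k+2,p)})=p-1$ for the next phase. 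Without fixing this (or an equivalent) schedule and checking these exposure counts, the induction you propose in step (iii) cannot even be set up---and with a natural-looking wrong choice (say, descending all of $C_{k+2}$ before returning) it fails. As it stands, the proposal has a genuine gap: it is a correct plan whose hard part is deferred, not a proof.
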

\begin{proof}
We will describe a strategy to decontaminate $G$ in which we decontaminate each column nonmonotonically. However, once we declare
 a column to be decontaminated, we do not allow any of its vertices to be contaminated again.

Set the temporal immunity $\tau = p$ and start with the agent at $v_{(1,1)}$. Proceed all the way up to $v_{(1,p)}$, move the agent to 
the next column onto $v_{(2, p)}$, and then start traversing down the column until we reach $v_{(2,\lceil\frac{p}{2}\rceil+1)}$.
 Note that the vertices of $C_1$ had started getting recontaminated when the agent
 reached $v_{(2, p - 1)}$ because the exposure time of $v_{(1,1)}$ became equal to $\tau$ at that point. Now move the agent back 
to $C_1$ onto $v_{(1,\lceil\frac{p}{2}\rceil+1)}$ and proceed all the way down back to $v_{(1,1)}$. We declare that $C_1$ has been
 decontaminated and none of its vertices will be recontaminated during the course of decontamination of the rest of the graph. It
 is pertinent to note that at this point, $\Xi(v_{(2, p)}) = \tau - 1 = p - 1$.
\begin{figure}[ht]
\begin{center}
\includegraphics[height=3in]{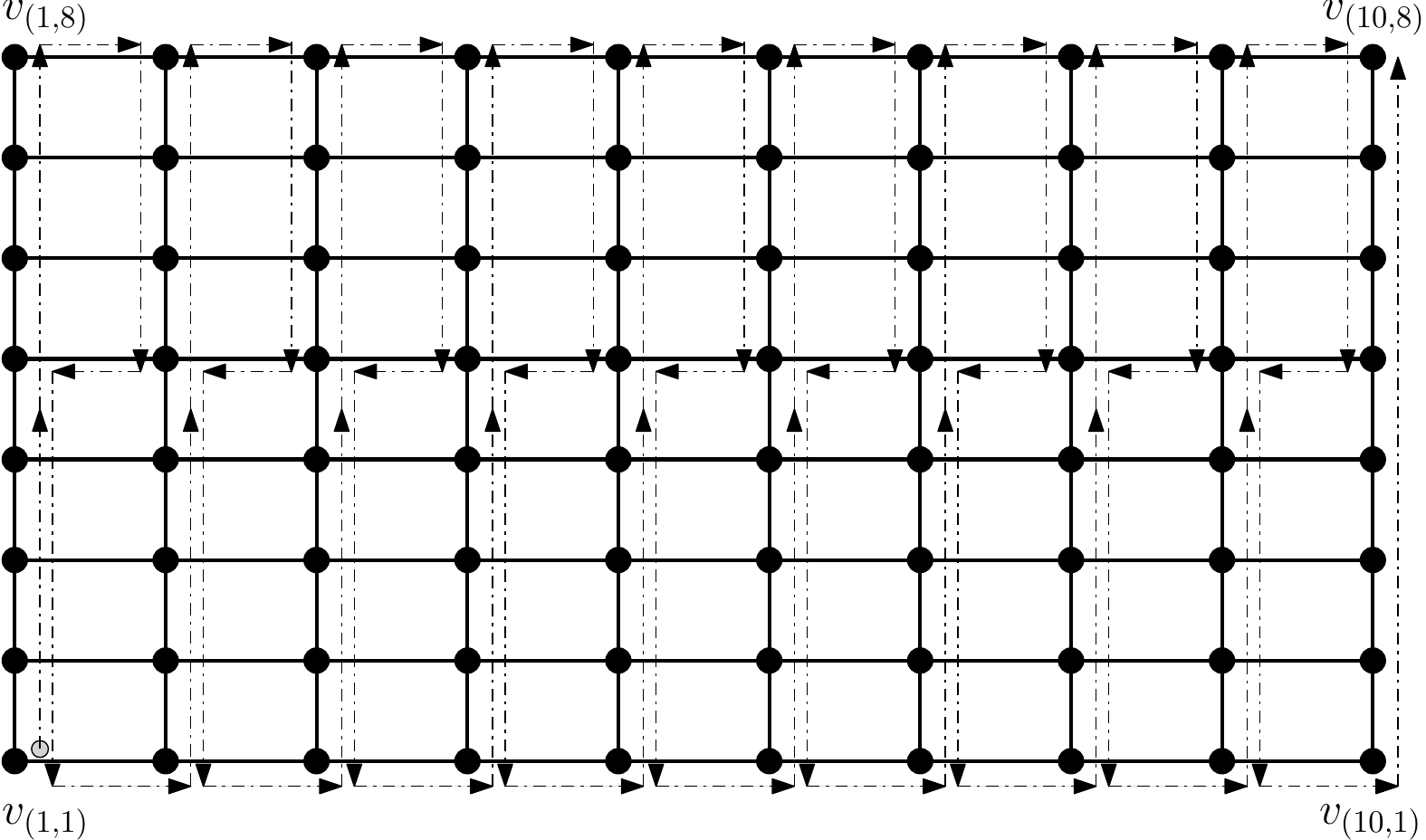}
\caption{Dotted line segments outline the path agent to decontaminate mesh graph with $\tau=8$. Once agent
returns to $v_{(1,1)}$, we declare first column decontaminated, and proceed to first vertex of second column, and henceforth.}
\label{mesh}
\end{center}
\end{figure}
To decontaminate the rest of the columns we use the following scheme. Assume that we have declared all the 
columns $C_1, C_2, \ldots, C_k$ to be decontaminated and our agent is at $v_{(k,1)}$. We also know that
 $\Xi(v_{(k+1, p)}) = \tau - 1$. We move the agent to the next column onto $v_{(k+1, 1)}$. At this point $v_{(k+1, p)}$
 becomes contaminated leaving $v_{(k, p)}$ exposed. We follow the same strategy as the one that we followed when we were
 decontaminating $C_1$. We move the agent all the way up to $v_{(k+1, p)}$, move to $C_{k+2}$, traverse all the way down 
to $v_{(k+2,\lceil\frac{p}{2}\rceil+1)}$, revert back to $C_{k+1}$ and move back down to $v_{(k+1, 1)}$ declaring column 
$C_{k+1}$ to be decontaminated. None of the vertices in $C_k$ will be recontaminated since $v_{(k, p)}$ had 
the maximum exposure time due to $v_{(k+1, p)}$, and we were able to 
decontaminate $v_{(k+1, p)}$ before $v_{(k, p)}$ got contaminated. Similarly, 
it is not difficult for the reader to verify that none of the rest of the vertices 
of $C_k$ are exposed long enough to be recontaminated.
\qed \end{proof}

\begin{corollary}
Let $G$ be a mesh on $n$ vertices, then $\iota(G) \leq \sqrt{n}$.
\end{corollary}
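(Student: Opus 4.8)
The plan is to apply Theorem~\ref{meshUpperbound} directly, combined with one elementary arithmetic observation about the dimensions of the mesh. A mesh on $n$ vertices is, by definition, a $p \times q$ mesh for some positive integers $p,q$ with $pq = n$; I would first orient it so that $p \le q$, making $p$ the smaller of the two dimensions. Since $p \le q$ and $pq = n$, multiplying the inequality $p \le q$ by the positive quantity $p$ gives $p^2 \le pq = n$, and hence $p \le \sqrt{n}$. Then Theorem~\ref{meshUpperbound} yields $\iota(G) \le p$, and chaining the two inequalities gives $\iota(G) \le p \le \sqrt{n}$, which is exactly the claim.

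There is no genuine obstacle here, as the corollary is an immediate specialization of the theorem once one notes that the short side of any factorization $n = pq$ cannot exceed $\sqrt{n}$. The only subtlety worth flagging is that the bound in Theorem~\ref{meshUpperbound} is stated in terms of the \emph{smaller} dimension $p$, so it is essential to fix the orientation $p \le q$ before invoking it; applying the theorem with $p$ and $q$ interchanged would only give the much weaker estimate $\iota(G) \le q$, which can be as large as $n-1$ in the degenerate case where the mesh is a single row.
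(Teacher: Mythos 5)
Your proof is correct and matches the paper's (implicit) reasoning exactly: the corollary is stated in the paper without proof as an immediate consequence of Theorem~\ref{meshUpperbound}, relying on precisely the observation that the smaller dimension $p$ of a $p \times q$ mesh with $pq = n$ satisfies $p \le \sqrt{n}$. Your remark about fixing the orientation $p \le q$ before invoking the theorem is a reasonable point of care, though not a substantive difference.
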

\begin{remark}
Strategy used in proof of Theorem~\ref{meshUpperbound} can also be used to decontaminate a {\em cylinder graph} (a mesh 
graph with an edge between the leftmost and the rightmost vertices on each row).
\end{remark}
In the following we present an asymptotically sharp lower bound for mesh graphs, but first we would like to
establish a graph isoperimetric result that we use in proof of lower bound.
\begin{lemma}
\label{lemmaMeshLowerbound}
 Let $G=(V,E)$ be an $\sqrt{n} \times \sqrt{n}$ mesh graph, then for any $W\subset V, |W|=\frac{n}{2}$,
size of maximum matching between $W$ and its complement has size at least $ \sqrt{n}$.
\end{lemma}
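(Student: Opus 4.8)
The plan is to pass to the covering side via König's theorem and then to a connectivity statement via Menger's theorem. Let $H$ be the bipartite graph on the parts $W$ and $\bar W = V\setminus W$ whose edges are exactly the \emph{crossing} edges of the mesh, i.e.\ those with one endpoint in each class; a maximum matching between $W$ and its complement is precisely a maximum matching of $H$. By König's theorem this number equals the minimum size of a vertex cover of $H$. Now a set $C\subseteq V$ covers every crossing edge exactly when the mesh with $C$ deleted has no edge joining $W\setminus C$ to $\bar W\setminus C$; and this in turn is equivalent to $C$ meeting every path of the mesh that begins in $W$ and ends in $\bar W$ (any such path must traverse a crossing edge, both of whose endpoints lie on it). Hence a vertex cover of $H$ is the same thing as a $W$--$\bar W$ vertex separator, and by Menger's theorem its minimum size equals the maximum number of pairwise vertex-disjoint $W$--$\bar W$ paths. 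So it suffices to exhibit $\sqrt n$ such paths, equivalently to show every $W$--$\bar W$ separator has at least $\sqrt n$ vertices.

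The next step would record the rigidity that a small separator forces. If a separator $C$ misses a row $R_i$, then $R_i$ is a path lying entirely in the mesh minus $C$ and so contains no crossing edge; that is, $R_i$ is \emph{monochromatic} (wholly in $W$ or wholly in $\bar W$), and likewise for every column missing $C$. Moreover a $C$-free all-$W$ row and a $C$-free all-$\bar W$ column would meet in a $C$-free vertex forced into both classes, which is impossible; iterating, if $|C|<\sqrt n$ then the at least $\sqrt n-|C|$ many $C$-free rows and the at least $\sqrt n-|C|$ many $C$-free columns must all carry one common colour, say $W$. Consequently every vertex of $\bar W$ lies in a row that meets $C$ and a column that meets $C$, so $\bar W$ is confined to the block cut out by the at most $|C|$ rows and at most $|C|$ columns meeting $C$.

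I would then build the $\sqrt n$ disjoint paths as follows. Every \emph{bichromatic} row already contains a crossing edge, and crossing edges drawn from distinct rows are automatically vertex-disjoint; so if all rows are bichromatic (as for a straight vertical bisection) we are done at once, and symmetrically if all columns are. The real work is to service the monochromatic (all-$W$) rows and columns: for each such line I would route a path into the $\bar W$-block isolated above, using the balance $|\bar W|=n/2$ to supply the endpoints, while keeping the new routes vertex-disjoint from one another and from the intra-row edges already selected.

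I expect this last construction to be the main obstacle. A purely cardinality-based count is too weak: the containment of $\bar W$ in the intersection of the $C$-meeting rows and columns only gives $|C|^2\ge n/2$, hence $|C|\gtrsim\sqrt{n/2}$, losing a constant factor because it ignores adjacency. The sharp value $\sqrt n$ reflects the vertex-isoperimetry of the mesh, where a half-sized set has boundary of size $\sqrt n$ realized by a straight cut; the delicate point is to turn that boundary into \emph{vertex-disjoint} crossing structure without sacrificing the constant, which is exactly what the row/column routing must achieve. I would therefore try to make the routing argument run by peeling off an outer boundary line so that it contributes one path, reducing to a smaller instance in which the confinement of $\bar W$ is preserved; getting the bookkeeping of disjointness and the balance condition to survive this reduction is where I anticipate the difficulty to concentrate.
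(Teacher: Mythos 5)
Your setup is valid but heavier than necessary: since $W$ and $\bar W$ partition $V$, every $W$--$\bar W$ path contains a crossing edge, so vertex-disjoint $W$--$\bar W$ paths and matchings of crossing edges are the same objects in both directions, and the detour through K\"onig and Menger buys nothing. Your two easy cases (all rows bichromatic, or all columns bichromatic) coincide with the paper's Cases 1 and 2 and are handled identically --- one crossing edge per line, disjointness for free. The genuine gap is exactly where you flag it yourself: the remaining case, in which some row and some column are monochromatic (necessarily all of one colour, since a monochromatic row meets a monochromatic column), so that the opposite colour class is confined to the positions cut out by at most $|C|$ rows and at most $|C|$ columns. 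For that case you offer only a plan --- ``route a path into the $\bar W$-block,'' ``peel off an outer boundary line'' --- with no construction, and your fallback count $|C|^2\ge n/2$ yields only $|C|\ge\sqrt{n/2}$. That is a factor $\sqrt 2$ short of the statement, and the shortfall matters: the constant $\frac{1}{2}$ in Theorem~\ref{thmMeshLowerbound} is exactly what the full $\sqrt n$ buys, so the proposal does not prove the lemma as stated.

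The missing ingredient, which is the entire content of the paper's Cases 3 and 4, is a way to extract disjoint crossing structure for the monochromatic lines using single vertical edges rather than long paths. The paper scans the rows with two markers; each time it meets an all-black row $R_x$ it looks for a nearby row $R_y$ below it (or $R_z$ above it) containing at least $x-y+1$ (respectively $z-x+1$) white vertices. Each such white vertex sits in a column whose vertex in $R_x$ is black, so that column contains a vertical crossing edge strictly inside the strip of rows between the two indices; picking one edge per column gives vertex-disjoint edges, at least as many as the strip has rows, and a counting argument shows that when $|W|=n/2$ the search for $y$ or $z$ cannot fail. Rows never absorbed into a strip are bichromatic and contribute their own horizontal edges as in Case 1. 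This strip-and-column bookkeeping is precisely the ``routing'' step you left open: replacing Menger paths by vertical edges in distinct columns is what makes disjointness automatic and preserves the constant $1$ in $\sqrt{n}$ rather than degrading it to $\sqrt{n/2}$.
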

\begin{proof}
For ease of understanding let us say that a vertex is colored white if it is in set $W$, and black otherwise. An
 edge is monochromatic if both its endpoints have the same color, and nonmonochromatic otherwise.
Let $R_1, R_2, \ldots, R_{\sqrt{n}}$, and $C_1, C_2, \ldots, C_{\sqrt{n}}$
 be the row and column sets respectively. We observe following four possible cases:
\begin{case}
\label{case1}
 {\em For each row $R_i$, $0<|R_i\cap W|<\sqrt{n}$:}\\
Since $R_i$ contains vertices of both colors, it is clear that there will be at least one
nonmonochromatic edge. We pick one such edge from each $R_i$. As these edges are disjoint, we have a matching of size 
at least $\sqrt{n}$.
\end{case}
\begin{case}
 {\em There exist two rows $R_i, R_j$, such that $|R_i\cap W|=0$, and $|R_j\cap W|=\sqrt{n}$:}\\
We interchange the roles of rows and columns. Claim then follows from Case~\ref{case1}.
\end{case}
\begin{case}
\label{case3} 
{\em There exists a row $R_i$, such that $|R_i\cap W|=0$, and for every row $R_j$, \mbox{$|R_j\cap W|$}$\neq \sqrt{n}$:}\\
%If there is exactly one row $r_i$ with all black vertices, then consider the row above, (reverse the ordering of
%rows if $i=1$), if $r_{i-1}$ has more than one white vertices then we map them to vertices exactly below them,
%otherwise we keep looking for a pair of rows $r_j$, $r_{j-1}$ such that $|r_j\cap W|\geq |r_{j-1}\cap W|+2$. If we find 
%such $r_j$, $r_{j-1}$, then there are at least two white vertices in $r_j$ which can be mapped to two black vertices
%directly below them in $r_{j-1}$, and every other row is match according to Case~\ref{case1}. In case of 
%failure we try below $r_i$. However if we can't find
%such a pair then we conclude that there ain't enough white vertices, and $|W|<\frac{n}{2}$ a contradiction.\\
We present a scheme to match vertices in this case below. 

We will use two markers $\mathbf{b}$ (for bottom row), and $\mathbf{c}$ (for current row). 
In the beginning, both point to the first row of the mesh i.e $\mathbf{b}:=\mathbf{c}:=1$.

\begin{enumerate}
 \item Locate minimum $x\geq \mathbf{c}\geq \mathbf{b}$, such that $R_x\cap W = \emptyset$. 
If we can not find such an $x$, go to Step 3.
  \begin{enumerate}
  \item Now locate maximum $y<x\leq \mathbf{b}$, such that  $|R_y\cap W| \geq x-y+1$. For all the white vertices in $R_y$ we have
black vertices in corresponding columns of $R_x$. So for each such column there exists a pair of rows 
$R_i, R_{i+1}$ with a nonmonochromatic edge in that column where $y\leq i< x$. We set $\mathbf{c}:=x+1$, and call 
rows $R_j$ for $ y\leq j\leq x$ {\em matched}.
\item If we cannot find such a $y$, then we look for a minimum $z>x$, such that \mbox{$|R_z\cap W| \geq z-x+1$}. 
For all the white vertices in $R_z$, we can find nonmonochromatic edges as above. 
We set $\mathbf{c}:=z+1$ and $\mathbf{b}:=x+1$ and we call rows $R_j, x\leq j\leq z$ {\em matched}.
  \end{enumerate}
\item Repeat Step 1. Failure to find both $y$ and $z$ at any step would imply a contradiction 
because there are not enough black vertices as assumed. In worst case \\
$$\displaystyle \left|\bigcup_{i=1}^{x-1}R_i\right|\leq (x-1)\frac{\sqrt{n}}{2}$$ \\(alternating complete black and white rows),
and \\$$\displaystyle \left|\bigcup_{j=x+1}^{\sqrt{n}} R_j\right|< (\sqrt{n}-(x+1))\frac{\sqrt{n}}{2}$$
\item Match all unmatched rows as in Case 1.
\end{enumerate}

\end{case}
\begin{case}
 {\em There exists a row $R_i$, such that $|R_i\cap W|=\sqrt{n}$ and for every row $R_j$, $|R_j\cap W|\neq 0$:}\\
Claim in this case follows directly from the proof of Case~\ref{case3} by reversing the roles of $W$ and $\overline{W}$.
\end{case}
This concludes the proof of Lemma.
\qed
\end{proof}

Note that bound in Lemma~\ref{lemmaMeshLowerbound} is tight when $W$ is a {\em rectangular} subgrid. We do not know
of a tight example which is not rectangular in shape. We observe that since $\Delta=4$ for mesh,
Lemma~\ref{lemmaMeshLowerbound} also
 follows from vertex and edge isoperimetric inequalities proved in \cite{bollobas1991compressions}\cite{bollobas1991edge} except 
for a constant factor.

\begin{thm}
\label{thmMeshLowerbound}
Let $G$ be a $p \times q$ mesh where $p \leq q$, then $\iota(G) > \frac{p}{2}$.
\end{thm}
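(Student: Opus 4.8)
The plan is to argue by contradiction: assume the $p \times q$ mesh $G$ can be fully decontaminated by a single agent with temporal immunity $\tau \le p/2$, and exhibit a moment at which the agent provably cannot keep pace with the recontamination along a large cut. The engine of the argument is Lemma~\ref{lemmaMeshLowerbound}: whenever the decontaminated region is ``balanced'' it has a boundary (a matching to its complement) of size $\Omega(p)$, while a single agent can refresh only one vertex per step.

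First I would pin down a balanced moment. Let $W_t$ denote the set of decontaminated vertices at time $t$ and set $n = pq$. Observe that $|W_{t+1}| \le |W_t| + 1$: the only vertex newly decontaminated in a step is the one the agent moves onto, whereas recontaminations only remove vertices from $W_t$, and resetting the clocks of already-clean vertices adds nothing. Since $|W_0| \le 1$ and a successful strategy reaches $|W_T| = n$, the quantity $|W_t|$—which rises by at most $1$ per step—must equal $n/2$ at some time $t^\ast$ (take the first time it is $\ge n/2$; the previous value is $\le n/2 - 1$, so the jump lands exactly on $n/2$). I would then apply Lemma~\ref{lemmaMeshLowerbound}, in its direct analogue for a $p \times q$ grid (for instance by restricting attention to a $p \times p$ square subgrid, or by re-running the row/column case analysis, whose bichromatic-rows case alone contributes one disjoint nonmonochromatic edge per row and hence a matching of size at least $p$), to obtain disjoint edges $\{(w_i, b_i)\}_{i=1}^{k}$ with $k \ge p$, each $w_i \in W_{t^\ast}$ clean and each $b_i \notin W_{t^\ast}$ contaminated. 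In particular at least $p$ clean vertices are simultaneously exposed.

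Next comes the counting heart of the argument. Over any window of $\tau$ consecutive steps the agent occupies at most $\tau \le p/2$ positions, so it can perform at most $p/2$ ``protective actions''; to spare a boundary vertex $w_i$ from recontamination the agent must either visit $w_i$ itself or clean every contaminated neighbour of $w_i$. Because there are $k \ge p > p/2$ disjoint such obligations, no single agent can discharge all of them within a $\tau$-window, so some exposed boundary vertex accumulates $\tau$ units of exposure and is recontaminated. I would turn this into the statement that the clean count cannot be advanced through the balanced regime: each cleaning step that pushes $|W|$ up is paid for by a forced recontamination across the cut, so $|W_t|$ cannot be driven from $n/2$ up to $n$, contradicting full decontamination. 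Hence $\tau \ge \lfloor p/2\rfloor + 1$, that is $\iota(G) > p/2$.

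The main obstacle is exactly the non-monotonicity that the model permits. At $t^\ast$ the exposure clocks $\Xi(w_i)$ may sit anywhere in $[0, \tau - 1]$, so a naive forward window need not force any particular $w_i$ to expire, and a clever schedule might clean some $b_i$ (momentarily shrinking the boundary) only to have $b_i$ become a freshly exposed clean vertex itself. The delicate part is therefore to make the ``$p$ obligations versus $p/2$ actions'' accounting robust against such churning—most cleanly through an amortized argument that charges each net increase of $|W_t|$ near the half-way cut to a recontamination it causes, thereby showing the contaminated count cannot reach zero when $\tau \le p/2$.
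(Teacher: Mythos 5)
Your proposal is correct and is essentially the paper's own argument: locate a balanced moment with $|W_{t^\ast}| = n/2$, invoke Lemma~\ref{lemmaMeshLowerbound} to get at least $p$ disjoint clean--contaminated edges, and count that in the next $\tau \le p/2$ steps a single agent can touch at most $p/2$ of these vertex-disjoint pairs, so at least $p/2$ clean endpoints are recontaminated while at most $p/2$ vertices are newly cleaned, capping the clean count at $n/2 + p/2 < n$ forever. The obstacle you flag in your final paragraph is not actually there: for any untouched pair $(w_i, b_i)$ the contaminated endpoint $b_i$ can only become clean via an agent visit, so $w_i$ stays exposed and unvisited for the entire $\tau$-window and hence accrues $\tau$ consecutive units of exposure no matter what $\Xi(w_i)$ was at $t^\ast$ (a smaller initial clock only delays the expiry within the window, it cannot prevent it), so the plain window count already closes the proof and no amortized charging argument is needed.
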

\proof
Let us assume the contrapositive i.e, a decontaminating algorithm exists with $\tau=\frac{p}{2}$.
For simplicity assume that $G$ is a $p \times p$ mesh and ignore the agent's moves in rest of the vertices if any. Let
$n=p^2$, then at some time step during this algorithm we will have exactly $\frac{n}{2}$ decontaminated vertices.
Lemma~\ref{lemmaMeshLowerbound} implies that at this stage at least $p$ vertices of $G$ are exposed through
at least $p$ disjoint edges to contaminated vertices. By considering all possible moves of the agent for next
$\frac{p}{2}$ steps it is clear that at least $\frac{p}{2}$ vertices will be recontaminated, and no matter what the agent does
this can decontaminate at most $\frac{p}{2}$ vertices making no progress at all. It already gives that number of
decontaminated vertices can never exceed $\frac{n}{2}+\frac{p}{2}$. It follows that no decontaminating algorithm 
exists with assumed temporal immunity.
\qed
\section{General Trees}
\label{secTrees}
To upper bound $\iota$ for general trees, we will try to adapt the strategy used 
 to decontaminate $k$-ary trees. The simplest approach is to naively
 apply the same strategy on a given tree $T$ as before, this time considering the center vertex (choose one arbitrarily if there
 are two center vertices) of the tree to be the \emph{root} and then visiting each of the leaves of
 $T$ in the depth first search discovery order, every time returning to the center vertex, as in the previous case. It is clear 
that an temporal immunity $\tau = 2\cdot rad(T) = diam(T) + 1$ is sufficient to fully decontaminate $T$ but 
the diameter of a tree on $n$ 
vertices can easily be $O(n)$. However, we can use nonmonotonicity to our advantage by letting a controlled number of 
vertices get recontaminated so that we get a much stronger bound even for trees with large diameters. 

We will need the following lemma which describes a monotone strategy to decontaminate trees with small height.

\begin{lemma}
\label{smalltreelemma}
Any rooted tree $T$, with height $h$, can be decontaminated with temporal immunity $\tau\geq \alpha h$, in time $cn$, 
where $n$ is the number of vertices in $T$, and $c\leq 4\frac{\alpha-1}{\alpha-2}$ for any positive 
$\alpha>2$. 
\end{lemma}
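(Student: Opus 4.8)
The plan is to augment a depth-first traversal of $T$ with periodic ``refresh trips'' to the root, choosing the refresh frequency so that no decontaminated vertex is ever exposed for $\tau = \alpha h$ consecutive steps. I would start the agent at the root and have it follow a fixed depth-first search (DFS) order, which by itself traverses each edge twice and hence takes $2(n-1)$ steps. On top of this, maintain a counter of DFS-steps elapsed since the last visit to the root; whenever it reaches the threshold $\theta := (\alpha-2)h$, suspend the DFS, walk the agent up the current root-to-position path to the root (refreshing every vertex on it), and then walk back down that same path to resume the DFS where it left off. Since at the moment of triggering the agent sits at depth at most $h$, each refresh trip costs at most $2h$ steps and traverses only already-clean vertices of the active path, so it introduces no new contamination.

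The heart of the argument is to show the strategy is monotone, and I would organize it by classifying a decontaminated vertex $v$ (at depth $d_v$) according to whether the agent currently lies in the subtree rooted at $v$. If it does, then $v$ lies on the active root-to-agent path, so every refresh trip visits $v$ both on its ascent and on its descent. By the choice of $\theta$ the gap between consecutive visits of the agent to the root is at most $\theta + 2h = \alpha h = \tau$, and since each refresh trip passes through $v$ twice, the gaps between consecutive visits to $v$ are all at most $\tau$ as well (the only gap not contained in a single root-to-root interval is the $2d_v \le 2h < \tau$ separating the two passes through $v$ that straddle a root visit, where $\alpha>2$ makes this strict). If the agent is not in $v$'s subtree, then either $v$ is still unvisited (contaminated, so no constraint) or the DFS has completely finished $v$'s subtree, in which case all children of $v$ are clean and its parent lies on the clean active path, so $v$ has no contaminated neighbor and is not exposed. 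Thus whenever $v$ is exposed it is revisited within $\tau$ steps, no vertex is ever recontaminated, and at the end of the DFS the agent is back at the root with every subtree complete, so all of $T$ is simultaneously clean.

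It then remains to bound the running time. The DFS contributes $2(n-1)$ steps, and refresh trips are triggered at most once per $\theta$ DFS-steps, hence at most $\lceil 2(n-1)/\theta \rceil \le 2(n-1)/\theta + 1$ times, each costing at most $2h$. The total overhead is therefore at most $4(n-1)/(\alpha-2) + 2h$, so the total time is at most $2n + \tfrac{4n}{\alpha-2} + 2h$; using $h < n$ to absorb the $2h$ term gives at most $4n + \tfrac{4n}{\alpha-2} = 4\tfrac{\alpha-1}{\alpha-2}\,n = cn$, exactly the claimed bound (note that it is the bundling of the DFS $2n$ with the rounded $2h$ that produces the leading factor $4$ rather than $2$).

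The step I expect to be most delicate is the monotonicity verification: one must check carefully that the periodic refresh trips genuinely re-cover every exposed vertex within $\tau$ steps, including the vertices temporarily left below the agent during an ascent and the deep DFS frontier that is not touched during the $\theta$ working steps. The choice $\theta = (\alpha-2)h$ is precisely what makes the worst-case root-to-root gap equal to $\tau$, and the slack $\alpha > 2$ is what keeps the incidental $2d_v \le 2h$ gaps strictly below $\tau$; these are the two places where the hypotheses $\tau \ge \alpha h$ and $\alpha > 2$ are used.
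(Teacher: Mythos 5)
Your proof is correct, but it follows a genuinely different strategy from the paper's. The paper works bottom-up through a decomposition: it locates the deepest ``heavy'' subtree $P_1$ rooted at $p_1$, partitions the tree hanging there into chunks $X_i$ of size roughly $h(\frac{\alpha}{2}-1)$, performs a depth-first traversal of each chunk while returning to the current anchor $p_j$ between chunks so that $p_j$ never gets recontaminated, and then climbs to $p_2, p_3, \ldots$ until the root is reached last; the time accounting is per chunk ($2q+2h$ spent per chunk of size at least about $q/2$, with $q=h(\alpha-2)$), yielding the same constant $4\frac{\alpha-1}{\alpha-2}$. You instead run one global DFS and interrupt it every $\theta=(\alpha-2)h$ steps with a refresh trip along the active root-to-position path, so the root is secured from the very beginning and every ancestor of the DFS position is revisited within $\theta+2h=\tau$ steps. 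Your version buys a cleaner invariant and avoids the paper's somewhat delicate chunking construction (whose size bounds are stated a bit inconsistently); the paper's version buys locality --- the agent only ever detours to the nearest anchor $p_j$ rather than all the way to the root --- and its chunk structure mirrors how the lemma is later deployed inside Theorem~\ref{thmTreesUpperBound}, though all that theorem actually needs is monotonicity plus the $cn$ time bound, which your strategy delivers equally well. One small repair: your case split should be phrased in terms of the DFS pointer (whether the DFS has finished $v$'s subtree), not the agent's instantaneous position, since during a refresh trip the agent sits outside $v$'s subtree while that subtree is still unfinished; in that configuration your ``case 2'' dichotomy (unvisited or finished) is false as literally stated, but your case 1 gap analysis already covers it, because $v$ then lies on the active path and the trip in progress passes through $v$ on both legs.
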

\begin{proof}
Assuming an arbitrary tree with height and temporal immunity $\tau$ as above, we present an algorithm with claimed time complexity.\\

Let $l$ be maximum integer such that there exists a subtree $P_1$ rooted at $p_1$ with $|P_1|>h\frac{\alpha}{2}-h$ at level $l$, and let 
$s_1, s_2,\ldots, s_m$ be children of one such $s$; for all $i$ let $S_i$ be subtrees rooted at
 $s_i$, then $|S_i|<h\frac{\alpha}{2}-h$ by maximality of $l$. Now let
$j$ be the largest integer such that $|S_1\cup \cdots \cup S_j \cup \{p_1\}|<(h\frac{\alpha}{2}-h)$, we define
$X_1:=S_1\cup \cdots \cup S_j \cup \{p_1\}$.
We similarly define $X_2,\cdots, X_k$,
as maximal subtrees all rooted at $p_1$ making sure that $\frac{1}{2}h(\frac{\alpha}{2}-1)<|X_i|<h(\frac{\alpha}{2}-1)$, 
with possible exception of
$X_k$ which might be of smaller cardinality.\\
\begin{figure}[ht]
\begin{center}
\includegraphics[height=2.5in,width=5in]{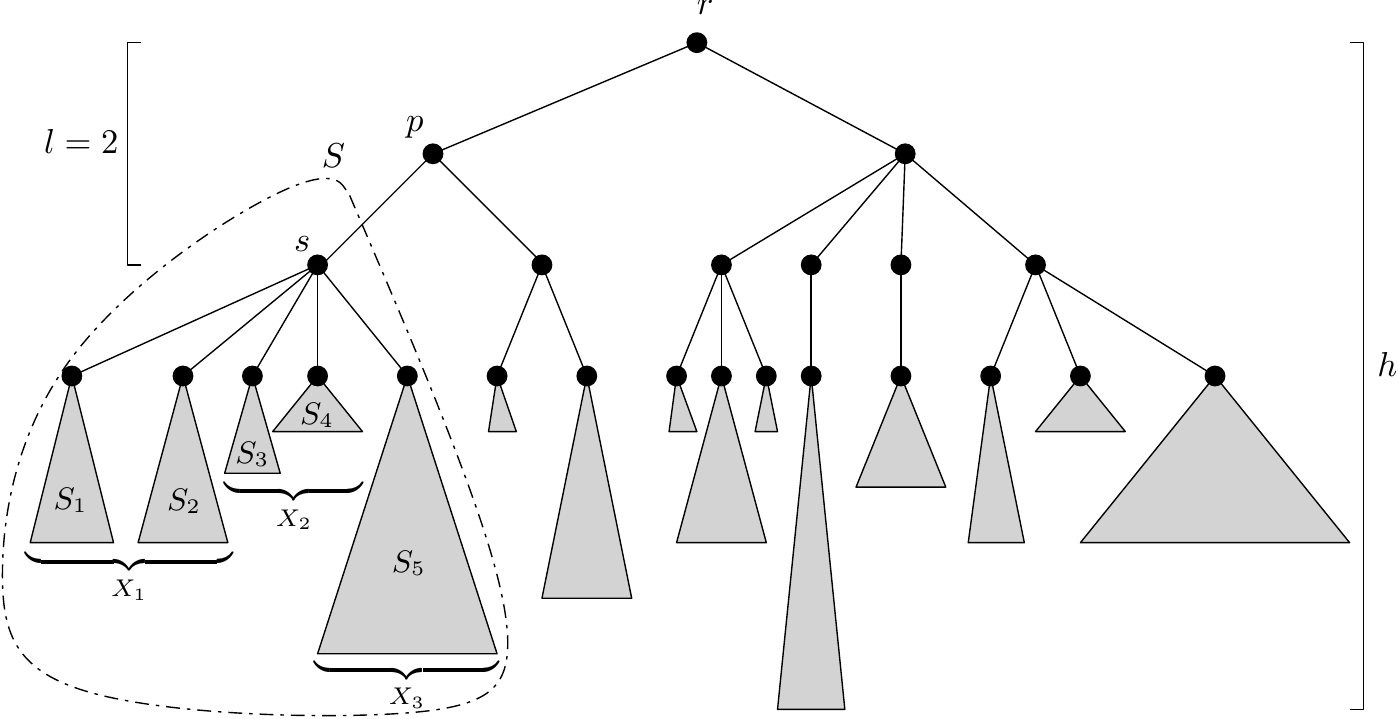}
\caption{Example illustrating grouping of $S_i$ into $X_j$}
\label{TreeLemma}
\end{center}
\end{figure}

For decontamination process,
the agent starts at root of $T$, walks its way to $p_1$, performs a depth first search traversal 
on each $X_i$ one by one. We can afford this because immunity is
strictly greater than the amount of time it takes to perform the traversal on each $X_i$; in fact
 its easy to see that any $\tau\geq h{\alpha}-2h$ is enough to completely clean $P_1$.\\

Next step is to walk up to $p_2$ parent of $p_1$. The plan is to make sure that $p_2$ never gets recontaminated. Let
$P_2$ be the subtree rooted at $p_2$.
Arbitrarily choose any subtree $R\subseteq P_2\setminus P_1$, at minimum possible distance from $P_2$ (e.g. potentially $R=P_2$),
with the property that for all subtrees $R_i$ of $R$, $|R_i|<h\frac{\alpha}{2}-h$ as before. 
We will group $R_i$'s into $X_j$'s as before but this time
after performing depth first search traversal on each $X_j$, we will pay a visit to $p_2$, making sure it remains decontaminated.
Once $P_2$ is decontaminated, we proceed to $p_3$ the parent of $p_2$ and repeat the process until $p_j$ is the root of $T$, 
and that we are done with decontamination
process. From the fact that each $X_i$ is small enough, it is easy to see that $\tau=\alpha h$ 
is enough for the process. We can always
 group any tree into at most $ 2\frac{n}{q}$ subtrees each of size $(h\frac{\alpha}{2}-h)\leq |X_i| \leq h({\alpha}-2)$
where $q=h({\alpha}-2)$. 
The agent spends at most $2q$
time units on depth first traversal and $2h$ time units on visiting some $p_j$ potentially at distance $h$ for each such subtree. 
Total amount of time spent in the process 
is
\begin{align*}
 &\leq 2\frac{n}{q}\times 2(q+h)\\
&\leq 2\frac{n}{q}\times 2(q+\frac{q}{\alpha-2})\\
&= 2{n}\times 2(1+\frac{1}{\alpha-2})\\
&= 4{n}\times \frac{\alpha-1}{\alpha-2}\\
\end{align*}
\qed \end{proof}

\begin{thm}
\label{thmTreesUpperBound}
Let $T$ be a tree on $n$ vertices, then $\iota(T) = O(\sqrt{n})$.
\end{thm}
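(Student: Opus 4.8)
The plan is to reduce the general case to the height-bounded case handled by Lemma~\ref{smalltreelemma}, using nonmonotonicity to absorb the contribution of the (possibly linear) diameter. The guiding principle is the same balancing that drives Theorem~\ref{thmSpider} and Corollary~\ref{corSpider}: the cost of cleaning a piece grows with its height, whereas the cost of keeping a junction clean grows with how many and how far the remaining pieces are, and forcing both scales to be $\sqrt{n}$ is exactly what produces an $O(\sqrt{n})$ bound. First I would root $T$ at a center vertex and dispose of the easy regime: if the height of $T$ is at most $\sqrt{n}$, then Lemma~\ref{smalltreelemma} applied with a fixed $\alpha>2$ (say $\alpha=4$) already decontaminates $T$ with $\tau=O(\sqrt{n})$ in time $O(n)$, so all the work lies in the regime where $T$ is tall.

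In that regime I would decompose $T$ in direct analogy with Corollary~\ref{corSpider}: separate the \emph{heavy} skeleton, consisting of the branches whose subtrees are large (at least $\sqrt{n}$ vertices, of which there can be at most $\sqrt{n}$ since each accounts for that many distinct vertices), from the \emph{light} fringe, consisting of the maximal subtrees of height $<\sqrt{n}$ that hang off the skeleton. This is the branching generalization of the long-arm/short-arm split used for spiders: the skeleton is a backbone of bounded width, and every fringe subtree has height $O(\sqrt{n})$. The cleaning is then a single nonmonotone sweep. I would clean fringe subtrees via Lemma~\ref{smalltreelemma}, clean the skeleton by the spider strategy of Theorem~\ref{thmSpider}, process pieces from farthest to nearest, and revisit each junction frequently enough that its exposure never reaches $\tau$; once a piece is declared clean it is protected from recontamination, exactly as the root is protected in Theorem~\ref{thmSpider} and the columns in Theorem~\ref{meshUpperbound}. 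Since every piece has height $O(\sqrt{n})$ and the skeleton carries $O(\sqrt{n})$ branches, the full traversal runs in time $O(n)$, and the same quadratic accounting as in inequality (\ref{spiderBound}) shows $\tau=O(\sqrt{n})$ suffices; propagating the constants through Lemma~\ref{smalltreelemma} and the two decomposition stages is what yields the stated $30\sqrt{n}$.

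The main obstacle is the exposure bookkeeping on the skeleton. Unlike a spider, where every arm hangs from one root, here a junction may separate the tree into several not-yet-cleaned parts, so I must guarantee that while the agent is off cleaning a distant subtree, no junction on the path back toward the root is left continuously exposed for $\tau$ steps. Making this rigorous requires fixing the order in which branches and fringe subtrees are processed and establishing a loop invariant — at the start of each stage the agent sits at a junction, every already-processed piece is clean and remains clean, and the accumulated exposure of each still-live junction is strictly below $\tau$ — which is the branching analogue of the single-root invariant of Theorem~\ref{thmSpider}. Verifying this invariant, together with checking that the single threshold $\sqrt{n}$ simultaneously bounds both the number of heavy branches and the height of the light fringe, is where the real care is needed; the rest is routine constant-chasing.
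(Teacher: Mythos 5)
Your high-level framing matches the paper's: root at a center vertex, use Lemma~\ref{smalltreelemma} as the workhorse for bounded-height pieces, exploit nonmonotonicity, and balance everything at the $\sqrt{n}$ scale (your easy case for height $\le \sqrt{n}$ and your count of at most $\sqrt{n}$ heavy branches are both fine). But there is a genuine gap at exactly the point you yourself flag as ``where the real care is needed'': you never supply a mechanism that keeps an already-cleaned junction clean while the agent works $\Omega(n)$ away from it, and the two mechanisms you do invoke cannot do the job. Revisiting a junction ``frequently enough that its exposure never reaches $\tau$'' is impossible once the agent is at distance $d \gg \tau = O(\sqrt{n})$, since a single round trip already costs $2d > \tau$. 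And the quadratic accounting of inequality~(\ref{spiderBound}) does not transfer to your heavy skeleton: that accounting is cheap only because in a spider every previously cleaned arm begins at the root, so the agent re-cleans all contaminated prefixes in a round-robin tour with zero travel overhead. Your skeleton has at most $\sqrt{n}$ leaves, but its junctions can be nested at depths up to $\Omega(n)$ (e.g.\ a tree with few leaves whose internal edges are subdivided into long paths), so reaching the contaminated prefix of an already-cleaned piece can cost $\Omega(n)$ in travel per visit, and the spider recurrence breaks down.

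The paper closes precisely this hole with a different device: contamination creeps at the rate of one vertex per $\tau$ time steps, so instead of revisiting distant cleaned vertices, the agent maintains a \emph{distance buffer} around them. Concretely, each iteration walks one root-to-leaf arm (leaves taken in depth-first discovery order), and along the way performs ``auxiliary'' decontaminations of the truncated subtrees $T_{\sqrt{n}}(c)$ and $T_{10\sqrt{n}}(v_j)$ at the branch vertices $v_j$ encountered; this maintains the invariant that every secured vertex is at distance more than $\sqrt{n}$ from every contaminated vertex. With $\tau = 30\sqrt{n}$, contamination then needs more than $30n$ time units to cross the buffer, while one iteration takes at most $25n$ time units, so secured vertices stay clean without ever being revisited. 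This creep-speed-times-buffer-width argument is the central idea that makes $O(\sqrt{n})$ possible despite linear diameter, and it is absent from your proposal; your decomposition and loop invariant are a reasonable scaffold, but as written the proof does not go through.
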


\begin{proof}
First of all, following is easily seen:
\begin{obs}  
\label{observationMonotone}
Decontamination strategy in Lemma~\ref{smalltreelemma} is a monotone strategy.
\end{obs}

Now let $c$ be a center vertex of $T$ and let $m$ be the number of leaves in $T$. Recall that an arm $A_i$ is
a set of vertices that lie on the path from $c$ to a leaf $l_i$ for all $1\leq i \leq m$. Given a tree $T$ rooted at $v$,
we denote by $T_{x}(v)$ a subtree of $T$ that is attained by removing all vertices from $T$ that 
are at distance more than $x$ from $v$ i.e, $T_{x}$ is $T$ truncated at depth $x$.
Assume without loss of generality that leaves $l_i$ are sorted in their depth first search discovery ordering. This implies
an ordering on arms $A_i$.
Note that $A_i\setminus \{c\}$ are not disjoint in general.
 Once we have an order, agent will start decontaminating 
 arms one by one according to following algorithm.
 
 \begin{itemize}
  \item For $i=1$ to $m$
  \begin{itemize}
    \item Perform an auxiliary step and apply Lemma~\ref{smalltreelemma} on $T_{\sqrt{n}}(c)$ with $\alpha=3$.
    \item Move the agent from $c$ towards leaf $l_i$ until it reaches a vertex $v_j$ with $deg(v_j)> 2$. 
    We will apply Lemma~\ref{smalltreelemma} on $T_{10\sqrt{n}}(v_j)$ again with $\alpha=3$. 
    After performing an auxiliary decontamination step, we will not perform any more auxiliary steps
    for next $5\sqrt{n}$ time units of this walk. Since $l_i$ can be at distance at most $\frac{n}{2}$ from $c$,
    total number of auxiliary steps we perform on this walk is bounded from above by $\frac{n}{10\sqrt{n}}$. It also
    follows that no vertex lies in more than two $T_{10\sqrt{n}}(v_j)$'s. 
    We return to $c$ along the shortest path. 
  \end{itemize}
 \end{itemize}

To analyze this scheme, we find following definition useful:
\begin{definition}
 A vertex $v$ in tree $T$ is called secured at some time step $i$ if it never gets contaminated again. 
\end{definition}

Agent decontaminates a new arm $A_i$ in $i$th iteration of the algorithm. We Observe that 
 \begin{claim}
Following invariants hold for every step of the algorithm:
\begin{enumerate}
 \item[(i)] Root $c$ is secured at iteration 1.
 \item[(ii)] For any secured vertex $v$, and a contaminated vertex $w$, which is in same branch as
 $A_i$, $dist(v,w)>\sqrt{n}$ at start of iteration $i+1$.
 \item[(iii)] All vertices $v_j\in A_i$ are secured at start of iteration $i+1$.
\end{enumerate}
\end{claim}

\begin{proof}
We fix $\tau=30\sqrt{n}$. Let $\Gamma(i)$ be the time spent in the algorithm at iteration $i$, 
then $\Gamma(i)$ can be broken down into three parts: 
$(1)$ the time spent performing auxiliary decontamination at $c$, 
$(2)$ the time spent visiting $l_i$, and
$(3)$ time spent at each auxiliary step on the way to $l_i$, which is $8a_j$ with $a_j$ be size of the tree used in auxiliary step.
We have,
\begin{align*}
 \Gamma(i) &\leq 8n + n + 8\Sigma_{j} a_j   \\
  &\leq 8n + n + 16 n   \\
 &= 25n 
\end{align*}
where we use the fact that $\Sigma_{j} a_j$ cannot be more than twice the number of total vertices since each vertex is used 
in at most two such auxiliary steps. Since $\tau=30\sqrt{n}$, after performing an auxiliary decontamination step
 on tree with $\sqrt{n}$ height, it takes $\geq 30n$ for the contamination 
to creep back to the root which is less than the time spent in one iteration. This fact along with 
Observation~\ref{observationMonotone} gives the first invariant (i).\\

Now a vertex $v$ is secured only if $v\in A_j$ for some $j\leq i$. If $v$ lies in a different branch than the one
$A_i$ lies in then invariant (ii), and (iii) follow from (i) i.e. 
if $c$ is secured then contamination has no way to spread from one branch to
another, and if a branch has been decontaminated, it will not get recontaminated. 
For any contaminated vertex $w$, $dist(c,w)>\sqrt{n}$ implies that $dist(v,w)>\sqrt{n}$, for any $v$ in a fully
decontaminated branch. So we assume without loss of generality that $v$ lies in the same branch as $A_i$.
From order defined on leaves in which we decontaminate them, it is clear that after iteration $i$, 
closest secured vertex to any
contaminated vertex, lies in arm $A_i$. 
A direct consequence of performing auxiliary decontamination during iteration $i$ is that any contaminated vertex
is at distance more than $5\sqrt{n}$ from closest $v\in A_i$. When we have completed iteration $i$, it is still 
more than $4\sqrt{n}$ distance away. Which implies part invariant $(ii)$.\\

%%TODO service these sentence
For any contaminated vertex $w$, any $u\in A_i$, and any $v\in A_j$ for $j<i$ all contained in the same branch 
it holds that $dist(u,w) < dist(v,w)$. It follows that  $v\in A_j$ for $ j< i$
never get contaminated during decontamination process of their branch. This along with $(i)$ implies $(iii)$. 
\qed \end{proof}

Claim completes the proof of Theorem with $\iota=30\sqrt{n}$. Although constant can be improved upto $6$, 
but resulting structure
of proof is messier and, in our opinion, doesn't yield any further insight into the problem. 
\qed \end{proof}

\section{Discussion}
\label{secDiscussion}
While we presented some interesting results, we would like to mention that there are still some very basic 
questions that seem to be open for further investigations.
For example, we showed that for any tree $T$, $\iota(T)=O(\sqrt{n})$, yet it is not clear whether this is 
asymptotically optimal or not. Using somewhat involved argument, it can be shown that there exist trees $T$
on $n$ vertices for which
$\iota(T)=\Omega(n^{\frac{1}{3}+\epsilon})$ for any constant $\epsilon>0$.
Its also noteworthy to state that if we limit algorithms to be monotone, its easy to see that
$\iota(T)=\Theta(n)$ e.g. consider a spider with three arms of equal length.\\
Another interesting topology is that of planar graphs. Since mesh is a planar graph, 
it directly follows from Theorem~\ref{thmMeshLowerbound} that
\begin{corollary}
\label{corPlanarLowerBound}
There exist planar graphs on $n$ vertices such that their immunity number $\iota > \frac{\sqrt{n}}{2}$.
\end{corollary}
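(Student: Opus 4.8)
The plan is to exhibit an explicit family of planar witnesses, and the natural candidate is simply the square mesh itself. First I would take $G$ to be a $\sqrt{n} \times \sqrt{n}$ mesh on $n$ vertices (restricting to $n$ that are perfect squares, which suffices to establish existence). As laid out in the mesh subsection, such a graph is embedded on the integer coordinates of the Cartesian plane with edges only between vertices at Euclidean distance exactly $1$; this embedding has no crossing edges, so $G$ is planar by construction with nothing further to verify.

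The second step is to invoke the lower bound already in hand. Theorem~\ref{thmMeshLowerbound} gives, for any $p \times q$ mesh with $p \le q$, the bound $\iota(G) > \tfrac{p}{2}$. Applying this with $p = q = \sqrt{n}$ yields $\iota(G) > \tfrac{\sqrt{n}}{2}$ immediately, which is exactly the claimed inequality. Thus the single family of square meshes simultaneously witnesses planarity and the required growth of the immunity number.

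Honestly, there is no real obstacle here: the corollary is a direct specialization of Theorem~\ref{thmMeshLowerbound} to the square case, combined with the trivial observation that meshes are planar. The only mild care needed is dimensional, namely that $\sqrt{n}$ be integral; if one wishes to cover every $n$ rather than just perfect squares, I would instead take the largest square mesh of side $\lfloor \sqrt{n} \rfloor$ and, if necessary, pad with a few isolated-looking path vertices attached along one boundary row, which keeps the graph planar and changes the bound only by lower-order terms. Since the statement asserts mere existence, restricting to perfect squares is the cleanest route and avoids even this minor bookkeeping.
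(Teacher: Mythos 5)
Your proof is correct and matches the paper's own argument exactly: the corollary is derived by applying Theorem~\ref{thmMeshLowerbound} to a square $\sqrt{n}\times\sqrt{n}$ mesh and observing that meshes are planar. Your extra remark about non-square $n$ is harmless but unnecessary, since the statement only asserts existence.
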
 
%Authers were unable to come up with any upper bound, except for the one we have for general graphs. 
We believe that
\begin{conjecture}
 Any planar graph $G$ on $n$ vertices can be decontaminated with $\tau(G)=O(\sqrt{n})$.
\end{conjecture}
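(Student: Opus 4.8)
The plan is to combine the classical planar separator theorem with the same two ingredients that drove the tree bound in Theorem~\ref{thmTreesUpperBound}: a clean ``buffer band'' of graph-distance width $\Omega(\sqrt n)$, and the observation that contamination spreads slowly. As a preliminary reduction I would pass to maximal planar graphs. If $G'\supseteq G$ share the same vertex set, then running any fixed agent trajectory on $G$ keeps, at every step, a superset of the vertices that stay clean on $G'$: a clean vertex has fewer contaminated neighbours in $G$ than in $G'$, so its exposure counter in $G$ never exceeds that in $G'$ and it is recontaminated no sooner. Hence $\iota(G)\le\iota(G')$, and it suffices to prove the bound when $G$ is a triangulation. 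For triangulations one has Miller's simple-cycle separator theorem, giving a \emph{simple cycle} $C$ of length $O(\sqrt n)$ whose interior and exterior each contain at most $\tfrac{2}{3}n$ vertices.

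The quantitative heart of the argument is the contamination-speed estimate already implicit in the tree proof: since a clean vertex is lost only after $\tau$ consecutive exposed steps, the contaminated frontier advances at most one edge per $\tau$ time units, so a clean region separated from all contamination by graph-distance $w$ survives for at least $w\tau$ steps with no agent present. Fixing $\tau=c\sqrt n$ for a large constant $c$, a buffer of width $\sqrt n$ therefore holds for $\Omega(n)$ steps. This is exactly what lets a single agent --- which can patrol the length-$O(\sqrt n)$ cycle $C$ within one immunity window --- temporarily abandon the interface: I would first clean a band consisting of the $O(\sqrt n)$ BFS layers surrounding $C$ so that the interior and exterior sides are separated by a clean buffer of width $\Omega(\sqrt n)$, and only then descend recursively into one side.

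The recursion cleans the interior (of size $\le\tfrac{2}{3}n$) and afterwards the exterior, each by the same scheme, while the agent returns to reseal the buffer band whenever creeping contamination from the not-yet-cleaned side threatens it; the $\Omega(n)$ survival time buys enough slack to interleave these reseals with the recursive work. Since each subproblem has at most $\tfrac{2}{3}n$ vertices, separators shrink geometrically and the recursion has depth $O(\log n)$; the target is to show that the schedule of reseals across all levels can be run with a single \emph{fixed} immunity $\tau=O(\sqrt n)$, so that the bound does not degrade with depth. When a recursive region is too ``thin'' to contain $\sqrt n$ surrounding layers, I would instead appeal directly to the Lemma~\ref{smalltreelemma}-style cleaning of its low-height skeleton, exactly as the mesh and tree arguments handle their frontiers.

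The hard part, and the reason this remains a conjecture, is reconciling the recursion with a fixed immunity, where two obstacles interact. First, an abstract $O(\sqrt n)$ separator need not be geometrically \emph{fat}: its vertices can be mutually far apart, and there may be no band of $\Omega(\sqrt n)$ disjoint layers around it to serve as a buffer, so the clean analogue of the mesh's ``column'' frontier is not guaranteed. Second, even granting a good buffer at every level, the reseal schedule must not compound: cleaning a deep subproblem takes superlinear total time, during which shallow buffers decay, and keeping all $O(\log n)$ nested buffers simultaneously intact without letting $\tau$ grow with the depth is precisely the scheduling difficulty that the single-level tree argument sidesteps. Controlling this interaction --- most plausibly by charging reseal visits against the geometrically decreasing subproblem sizes so that the total patrol overhead at each vertex stays $O(1)$ per level --- is where I expect the real work to lie.
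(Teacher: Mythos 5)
You are attempting to prove something the paper itself does not prove: this statement is explicitly a \emph{conjecture}. The only evidence the paper offers is Theorem~\ref{thmPlanarGraphUpper}, which uses the same planar-separator recursion you propose but for a different quantity --- the search number $s(G)=O(\sqrt{n})$, i.e.\ many agents with zero immunity, where ``reusing agents'' replaces the delicate time-scheduling that a single agent requires. Your proposal is in the same spirit, and your contamination-speed estimate (a clean region at graph-distance $w$ from all contamination survives $\ge w\tau$ agent-free steps) is correct and is indeed the engine of the paper's tree argument. But your own final paragraph concedes that the two central difficulties --- separators need not admit a fat buffer of $\Omega(\sqrt{n})$ disjoint layers, and the reseal schedule across $O(\log n)$ nested levels must not force $\tau$ to grow with depth --- are unresolved. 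So this is a research plan, not a proof, which is consistent with the statement's status in the paper.

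Beyond the admitted gaps, however, your preliminary reduction is provably false, and the paper itself contains the refutation. You claim that if $G\subseteq G'$ on the same vertex set then $\iota(G)\le\iota(G')$, so that it suffices to handle triangulations. Observation~\ref{obsKahn} exhibits a spider $G$ and a supergraph $G^*$ (obtained by adding edges between consecutive arms) with $\iota(G^*)=2<\iota(G)$: immunity number is \emph{not} an increasing graph property. Your coupling argument fails because edges are not only conduits of contamination but also the agent's roads. A fixed trajectory valid in both graphs does indeed leave at least as many clean vertices in $G$ as in $G'$, but the optimal strategy for the triangulation $G'$ may move the agent along edges that do not exist in $G$, so it cannot be replayed in $G$ at all; the inequality you need runs in exactly the direction the counterexample kills. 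Consequently, even a complete solution of the fat-buffer and scheduling problems for triangulations would not transfer to general planar graphs by this route --- you would have to run the separator/buffer scheme on $G$ itself, or find a genuinely different reduction.
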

%Some evidence for this conjecture
%For one we could not come with any specific planar graphs that can't be decontaminated with given temporal immunity.
% Our belief also got a boost
%we were able to show 
A similar bound for a slightly different problem of bounding, $s(G)$, lends some credence to the above conjecture.
Search number, $s(G)$, is the minimum number of agents needed to decontaminate a graph with $\tau=0$. Following was proved in 
\cite{alonchasing} by Alon et al., but we note that proof we present here is simpler, shorter, and more intuitive.
\begin{thm}
\label{thmPlanarGraphUpper}
Any planar graph $G=(V,E)$ on $n$ vertices can be decontaminated with $s(G)=O(\sqrt{n})$ agents where 
vertices of $G$ don't have any immunity.
\end{thm}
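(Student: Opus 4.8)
The plan is to invoke the Lipton--Tarjan planar separator theorem and clean $G$ by a divide-and-conquer strategy. Recall that the separator theorem guarantees a set $S\subseteq V$ with $|S| = O(\sqrt{n})$ whose removal partitions $V\setminus S$ into two sets $A$ and $B$, each of size at most $\frac{2}{3}n$, with no edge between $A$ and $B$. I would let $f(n)$ denote the number of agents my strategy uses to clean an arbitrary planar graph on $n$ vertices, and prove $f(n)=O(\sqrt{n})$ by induction on $n$, with the trivial base case $f(1)=1$.

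The strategy itself: first place one agent on each vertex of $S$, so that all of $S$ is guarded. Because every $A$--$B$ path passes through $S$, contamination cannot cross from $B$ into $A$ (or vice versa) while $S$ is held. With the remaining agents I would recursively clean the planar graph induced on $A$; since the vertices of $A$ adjacent to $S$ only ever see guarded (hence clean) neighbors in $S$, this subcomputation behaves exactly like cleaning a free-standing planar graph on at most $\frac{2}{3}n$ vertices. Once $A$ is clean I release its agents and recursively clean $B$ the same way; $A$ stays clean throughout because its only outside neighbors lie in the still-guarded $S$.

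This yields the recurrence
\begin{equation}
f(n) \le |S| + f\!\left(\tfrac{2}{3}n\right) \le c\sqrt{n} + f\!\left(\tfrac{2}{3}n\right),
\end{equation}
since the $|S|$ agents on the separator are held while the single subproblem of size at most $\frac{2}{3}n$ is solved, and the two recursive calls are run one after the other rather than simultaneously. Unrolling gives a geometric series $c\sqrt{n}\sum_{i\ge 0}(\sqrt{2/3})^{\,i}$, which converges because $\sqrt{2/3}<1$; equivalently, the inductive hypothesis $f(m)\le C\sqrt{m}$ closes as soon as $C \ge c/(1-\sqrt{2/3})$. Hence $f(n)=O(\sqrt{n})$.

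The main thing to get right is the monotonicity bookkeeping: I must argue that at any instant the only agents in play are those sitting on separators along a single root-to-leaf path of the recursion tree, because each side is fully cleaned and its agents freed before the other side is touched, so the agent count is a path-sum and not a sum over the whole tree. I also need to confirm that holding $S$ genuinely protects the inactive side for the entire duration of the active side's cleaning, which follows from the absence of $A$--$B$ edges together with the fact that guarded vertices never become contaminated when $\tau=0$.
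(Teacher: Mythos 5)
Your proposal is correct and follows essentially the same route as the paper: apply the planar separator theorem, guard the separator with $O(\sqrt{n})$ agents, clean the two sides sequentially so agents can be reused, and resolve the recurrence $f(n)\le c\sqrt{n}+f\!\left(\tfrac{2}{3}n\right)$ via a geometric series. Your added bookkeeping about active agents lying on a single root-to-leaf path of the recursion tree is a nice explicit justification of the $\max$-style recurrence that the paper states more tersely.
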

\begin{proof}
We partition $V$ into three sets $V_1, V_2,$ and $S$ using Planar Separator Theorem \cite{lipton1979separator}, 
where $|V_i|\leq \frac{2n}{3}$, 
$|S|\leq 3\sqrt{n}$, owing to improvements in \cite{djidjev1982problem}, and 
for any $v\in V_1$, and any $w\in V_2$, edge $vw\notin E$. We place $3\sqrt{n}$ agents
on $S$ to make sure that contamination can not spread from $V_1$ to $V_2$, or vice versa. Let
$G_1=(V_1,E_1)$ be the subgraph of $G$ where an edge of $E$ is in $E_1$ if both its endpoints are in $V_1$. Similarly,
define $G_2=(V_2,E_2)$. Now lets say it takes 
$s(G_1)$ agents to decontaminate $G_1$, once $G_1$ is fully decontaminated, we can reuse all those agents to 
decontaminate $G_2$. Since both $G_1$ and $G_2$ are also planar graph, this gives us an obvious recurrence for $s(G)$:
$$
s(G) \le \max(s(G_1),s(G_2)) + 3\sqrt{n}
$$
So the total number of agents required is at most $3\sqrt{n}+3\sqrt{\frac{2n}{3}}+\ldots=O(\sqrt{n})$ which completes the proof.
\qed \end{proof}

Technique used in proof of Theorem~\ref{thmPlanarGraphUpper} may help devise a similar proof for the conjectured bound on immunity
number of planar graph. 
In any case, we do have a hunch that
Planar Separator Theorem may be beneficial in that case as well.\\

Also its not hard to show that $K_n$ has highest immunity number among all graphs on $n$ vertices.
\begin{thm}
\label{thmGraphUpperBound}
Any connected graph $G=(V,E)$ on $n$ vertices can be decontaminated with $\tau=n-1$.
\end{thm}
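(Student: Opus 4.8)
The plan is to generalize the strategy that gives $\iota(K_n)\le n-1$ in Theorem~\ref{thmComplete}. There the agent sweeps all $n$ vertices along a Hamiltonian path of length $n-1$, and the decisive point is that every decontaminated vertex $v$ is continuously exposed for at most $n-1$ time units: its single maximal exposure interval runs from the step after $v$ is last visited until the step at which its final contaminated neighbor is cleaned, and since all visit times lie in $\{0,1,\dots,n-1\}$ this interval never forces recontamination (the simultaneous decontamination of the last neighbor resets $\Xi(v)$ exactly when it would otherwise reach $\tau=n-1$). I would isolate this as the sufficient condition to aim for on an arbitrary $G$: produce a traversal in which, for every vertex $v$, the gap between the last visit to $v$ and the first time all of $N(v)$ is decontaminated is at most $n-1$.

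To build such a traversal on an arbitrary connected $G$, I would first fix a spanning tree and a connected vertex ordering $v_1,\dots,v_n$ in which every prefix $\{v_1,\dots,v_i\}$ induces a connected subgraph (a depth-first discovery order works). The agent discovers the vertices in this order while maintaining a connected, already-secured region, and — in the spirit of the nonmonotone strategies used earlier for the spider, the mesh, and general trees — I would allow the deep, not-yet-secured part of the graph to be recontaminated and recleaned in rounds, provided the secured prefix is never recontaminated. For each vertex the quantity to control is the length of its maximal continuous-exposure interval; since $G$ has only $n$ vertices, each contaminated neighbor of a given vertex can in principle be removed within the $\tau=n-1$ window if the servicing schedule is arranged correctly, so that the bound matches the complete-graph bottleneck rather than the $2(n-1)$ of the naive depth-first sweep.

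The main obstacle is precisely the factor of two separating this target from the trivial bound $\iota(G)\le 2(n-1)$ obtained by a depth-first traversal of a spanning tree. Because the agent must move along edges, revisiting vertices on backtracks inflates a monotone tree sweep to length $2(n-1)$, and a single ``early'' vertex — say the root with a large subtree hanging off it — can then remain continuously exposed for nearly $2(n-1)$ steps, far beyond $n-1$. Shaving this down to $n-1$ for an arbitrary topology is the crux. I expect the argument to focus on the endgame, when only a few contaminated vertices remain and the configuration is forced to resemble the tight complete-graph situation of Theorem~\ref{thmComplete}, and to exploit controlled recontamination so that no decontaminated vertex ever waits more than $n-1$ steps between a visit and the cleaning of its last contaminated neighbor. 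Verifying that such a schedule exists for every connected $G$, and that the secured region genuinely never recontaminates, is the step I anticipate will require the most care.
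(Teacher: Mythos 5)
Your proposal does not contain a proof; it contains a correct restatement of what a proof must accomplish, together with a framework that you yourself show cannot deliver it. You rightly isolate the sufficient condition (for every vertex $v$, the gap between the last visit to $v$ and the moment the last contaminated vertex of $N(v)$ is cleaned must stay within $n-1$), and you rightly identify the obstruction: any spanning-tree sweep with backtracking costs $2(n-1)$, and your proposed scheme (DFS prefix order, a secured connected region, recleaning in rounds) is exactly such a sweep, with no mechanism offered for recovering the factor of two. Your closing paragraph then explicitly defers the construction of the schedule and the verification that the secured region stays clean -- but the existence of that schedule \emph{is} the theorem. So there is a genuine gap: the constructive core of the argument is missing, not merely unpolished.

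The idea you are missing is that the paper abandons spanning trees entirely and grows a simple path greedily: the agent repeatedly steps to an unvisited neighbor. Either this walk absorbs all of $V$ in $n-1$ steps and we are done, or it gets stuck at a vertex $v_k$ all of whose neighbors have already been visited -- and therefore all lie on the current path $v_1,\ldots,v_{k-1}$. The paper calls $v_k$ a \emph{terminal} vertex. The agent then walks back along the path to $v_1$ and returns to $v_{k-1}$: this single round trip passes through every neighbor of $v_k$ within fewer than $n-1$ steps, which is what makes $v_k$ permanently safe, so it can be deleted from further consideration. The greedy extension then resumes from $v_{k-1}$ through another unvisited neighbor, with the same treatment for each new terminal vertex, until the remaining (non-terminal) vertices are covered by a traversal that closes within $n-1$ steps of leaving $v_1$. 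The structural fact doing all the work -- and absent from your plan -- is that when the greedy walk gets stuck, the stuck vertex's entire neighborhood lies on the already-traversed path, so one back-and-forth along that path simultaneously refreshes that neighborhood, secures the terminal vertex forever, and repositions the agent to continue; this is precisely what turns the ``endgame'' you gesture at into an argument.
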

\begin{proof}
Start with a agent on arbitrary vertex $v_1$, and at each time step 
keep walking the agent to successive nonvisited neighbors. If we exhaust all $V$ then we are done since 
we visited all vertices before first vertex got recontaminated. Otherwise agent gets stuck at the end of some 
path $v_1, \ldots, v_{k-1}, v_k$ such that all neighbors of $v_k$ have already been 
visited. We call such a vertex {\em terminal} vertex. For the rest of decontamination process, we will assume that $v_k$ does not exist. 
We traverse the agent back along $v_k, v_{k-1},\ldots, v_1$ to reach $v_1$, and then come back along same path to reach $v_{k-1}$.
This time the agent moves to some other neighbor of $v_{k-1}$ if any, 
and continue as before either finding another another terminal vertex and deleting it too or finding a 
cycle on rest of the vertices. In either case, process completes in finite time. Since the agent decontaminated terminal vertices,
they cannot contaminate any other vertex after they have been visited. And since, every time the agent encounters a terminal vertex
it goes
back to $v_1$, and visits all its neighbors (all of which lie on agent's path back to $v_1$)
in the next less than $n-1$ steps, terminal vertices cannot get contaminated again.
Vertices that are not terminal are decontaminated at the end of the process because they are visited in the traversal on cycle
which takes at most $n-1$ steps after we leave $v_1$. The claim follows.
\qed \end{proof}
This might tempt one to conjecture that $\iota(G)$ is an increasing graph property i.e. if we add new edges to $G$ then
immunity number can only go up. But as the following claim illustrates, that is not the case. 
\begin{obs}
\label{obsKahn}
Immunity number is not an increasing graph property. \cite{Kahn}
\end{obs}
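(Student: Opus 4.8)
The statement is a negative one, so the natural plan is to refute it with a single explicit witness: a pair of graphs $G \subsetneq H$ on the same vertex set, differing only by added edges, with $\iota(G) > \iota(H)$. The plan is to take $G = C_4$, the $4$-cycle on vertices $a,b,c,d$ with edges $ab, bc, cd, da$, and to form $H$ by adding the single chord $ac$, so that $H = K_4 - bd$ is the ``diamond'' in which $a,c$ have degree $3$ and $b,d$ have degree $2$. I would then show that inserting this one edge \emph{strictly lowers} the immunity number.

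For $G$ there is nothing to compute: Proposition~\ref{propCycle} gives $\iota(C_4) = 2$. For $H$ I would prove $\iota(H) = 1$ in two steps. The lower bound $\iota(H) \ge 1$ is immediate: with $\tau = 0$ no decontaminated vertex may ever be exposed, yet every vertex of $H$ has degree at least $2$, so the instant the agent steps off the first cleaned vertex onto a neighbour, that cleaned vertex still has a contaminated neighbour and is recontaminated at once; hence $\tau = 0$ can never leave two vertices simultaneously clean, and decontamination is impossible. For the upper bound $\iota(H) \le 1$ I would exhibit the sweep $b \to a \to c \to d$, which is legal since $ba, ac, cd \in E(H)$, and check step by step with $\tau = 1$ that every vertex left behind is relieved exactly at its deadline: $b$ is cleaned at $t=0$ and becomes exposed when the agent departs at $t=1$, but at $t=2$ the agent cleans $c$, so all of $N(b)=\{a,c\}$ is decontaminated and $\Xi(b)$ resets; likewise $a$ is exposed when the agent departs at $t=2$ and is relieved at $t=3$ when $d$ is cleaned, completing $N(a)$; and $c,d$ are never exposed. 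Thus $\iota(H)=1$.

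Finally, since $H$ arises from $G$ purely by adding an edge while $\iota(G) = 2 > 1 = \iota(H)$, the immunity number can strictly decrease under edge addition, so it is not an increasing graph property. The one delicate point — the main obstacle — is the timing convention underlying the upper bound: the argument for $H$ relies on a vertex exposed for a single full time unit being \emph{saved} when relief arrives exactly at time $t+\tau$. This is precisely the convention already used, and thereby implicitly validated, in the star lemma, where the agent rescues the exposed centre exactly $\tau = 1$ step after leaving it; invoking that same reading (and contrasting it with the cycle lower bound in Proposition~\ref{propCycle}, where $v_1$ dies only because relief \emph{fails} to arrive by its deadline) makes the diamond sweep rigorous and the whole argument airtight.
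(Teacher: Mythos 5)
There is a genuine gap, and it is exactly at the point you flagged as ``delicate.'' Your entire example rests on the claim $\iota(H)\le 1$ for the diamond, which requires the following tie-breaking convention: a vertex whose exposure clock reaches $\tau$ at the very step in which its \emph{last contaminated neighbor} is cleaned gets reset rather than recontaminated. The paper resolves this tie the opposite way, and it must: in the proof of Theorem~\ref{thmComplete}, with $\tau=n-2$ and $\Xi(v_1)=n-3$, the paper states that if the agent moves onto the last contaminated vertex $v_n$, ``then $v_1$ becomes contaminated'' --- even though cleaning $v_n$ makes every neighbor of $v_1$ clean at that same step. Indeed, under your convention the sweep $v_1,v_2,\ldots,v_n$ would fully decontaminate $K_n$ with $\tau=n-2$ (only $v_1$ is at its deadline when $v_n$ is cleaned, and your rule saves it), flatly contradicting $\iota(K_n)=n-1$. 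Your appeal to the star lemma does not rescue you: there the deadline relief is an \emph{agent visit to the exposed vertex itself}, which re-decontaminates that vertex no matter how the simultaneity is resolved; your diamond needs the tie broken in favor of the ``all of $N(v)$ clean'' reset, a different event, and the $K_n$ proof shows the paper breaks that tie against you. Under the paper's convention your sweep $b\to a\to c\to d$ fails at $t=2$: $b$ is recontaminated in the same step that $c$ is cleaned, after which (as in the lower-bound argument of Proposition~\ref{propCycle}) one can never hold more than two clean vertices with $\tau=1$, so $\iota(H)=2=\iota(C_4)$ and no strict decrease is exhibited.

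The structural lesson is that a counterexample whose gap is exactly one time unit cannot be robust in this model, where every proof is sensitive to off-by-one timing conventions. The paper's own construction avoids this: it takes a spider $G$ with $2\sqrt{n}$ arms of alternating lengths $\sqrt{n}-1$ and $1$, and forms $G^*$ by adding edges joining vertices of consecutive arms; then $\iota(G^*)=2$ by a short sweep, while $\iota(G)>2$ by a margin that does not depend on how simultaneous events are ordered. If you want to salvage your ``small explicit witness'' strategy, you need a pair $G\subsetneq H$ where the drop in immunity number is at least two, so that the conclusion survives any reasonable reading of the recontamination rule.
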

\begin{proof}
Consider the following counter-example.
Let $G$ be a spider with $2\sqrt{n}$ arms labeled $A_1,A_2,\ldots, A_{2\sqrt{n}}$, where\\
$$
|A_i| = \begin{cases} \sqrt{n}-1 &\mbox{if } i \equiv 0 \\
1 & \mbox{if } i \equiv 1 \end{cases} \pmod{2}.
$$
%$|A_i| = \left{ \begin{array}{ll} \sqrt{n}-1  & \mbox{if } i\%2=0 \ 1 & \mbox{if  } i\%2=1 \end{array} \right$
\\
\begin{figure}[h]
\begin{center}
\includegraphics[height=2in]{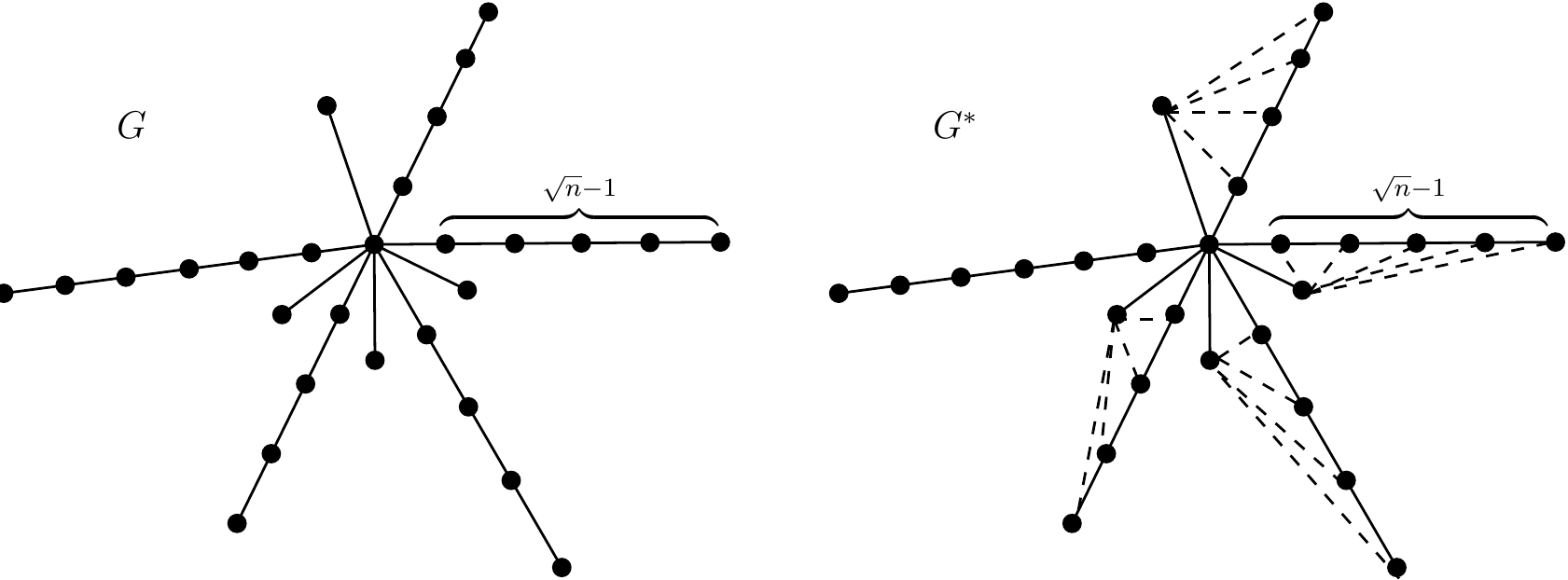}
\caption{(Left) $G$ is a spider tree, and can't be decontaminated with small temporal immunity. We get $G^*$ (on right)
by adding dashed edges, and its easy to see that we can decontaminate $G^*$ with $\tau=2$.
}
\label{nonmonotonicity}
\end{center}
\end{figure}
%TODO mention tree lowerbound above
 Now construct $G^*$ by adding edges
 $vw$ where $ v\in A_i, w\in A_{i+1},$ for all $i \equiv 1 \pmod{2}$ then we can decontaminate $G^*$ with $\tau(G^*)=2$. 
 We leave it as an exercise for the reader to verify that $\iota(G) > 2$.\\ 
\qed \end{proof}
There are natural generalizations of the problem investigated in this paper to directed and weighted graphs. One can also look at
behavior of immunity number of random graphs.
\section{Acknowledgments}
We are grateful to Devendra Desai, Jeff Kahn, and William Steiger for their insightful comments and discussions. 
\bibliography{decon}
\bibliographystyle{plain}
\newpage

\end{document}